\newtheorem{theorem}{Theorem}
\newtheorem{lemma}[theorem]{Lemma}
\newtheorem{lettertheorem}{Theorem}
\newtheorem{letterlemma}[lettertheorem]{Lemma}
\theoremstyle{definition}
\theoremstyle{remark}
\numberwithin{equation}{section}
\newcommand{\set}[1]{\left\{#1\right\}}
\newcommand{\abs}[1]{\lvert#1\rvert}
\newcommand{\nm}[1]{\lVert#1\rVert}
\newcommand{\B}{\mathcal{B}}
\newcommand{\D}{\mathbb{D}}
\newcommand{\DD}{\widehat{\mathcal{D}}}
\newcommand{\Dd}{\widecheck{\mathcal{D}}}
\newcommand{\DDD}{\mathcal{D}}
\newcommand{\De}{\mathcal{D}}
\newcommand{\N}{\mathbb{N}}
\newcommand{\C}{\mathbb{C}}
\renewcommand{\phi}{\varphi}
\def\a{\alpha}       \def\b{\beta}        
       \def\De{{\Delta}}    
\def\la{\lambda}     \def\om{\omega}      
         \def\r{\rho}         \def\z{\zeta}
\def\omg{\widehat{\omega}}
\def\nug{\widehat{\nu}}
\renewcommand{\H}{\mathcal{H}}
\newenvironment{Prf}{\noindent{\emph{Proof of}}}
{\hfill$\Box$ }
\begin{document}

\title[Hilbert-type operator induced by radial weight]{Hilbert-type operator induced by radial weight}

\keywords{Hilbert operator, Hardy space, Bergman space,  reproducing kernel, radial weight. }

\author{Jos\'e \'Angel Pel\'aez}
\address{Departamento de An\'alisis Matem\'atico, Universidad de M\'alaga, Campus de
Teatinos, 29071 M\'alaga, Spain} \email{japelaez@uma.es}

\author{Elena de la Rosa}
\address{Departamento de An\'alisis Matem\'atico, Universidad de M\'alaga, Campus de
Teatinos, 29071 M\'alaga, Spain} 
\email{elena.rosa@uma.es}

\thanks{This research was supported in part by Ministerio de Econom\'{\i}a y Competitividad, Spain, projects
PGC2018-096166-B-100 and MTM2017-90584-REDT; La Junta de Andaluc{\'i}a,
projects FQM210 S and UMA18-FEDERJA-002.}


\maketitle

\begin{abstract}
 We consider 
 the Hilbert-type operator defined by
 $$
  H_{\omega}(f)(z)=\int_0^1 f(t)\left(\frac{1}{z}\int_0^z B^{\omega}_t(u)\,du\right)\,\omega(t)dt,$$
  where  $\{B^{\omega}_\zeta\}_{\zeta\in\mathbb{D}}$ are the  reproducing kernels of the Bergman space $A^2_\omega$   induced by a radial weight $\omega$
in the unit disc $\mathbb{D}$.
We prove that $H_{\omega}$ is bounded from $H^\infty$ to  the  Bloch space if and only if
  $\omega$ belongs to the class $\widehat{\mathcal{D}}$, which consists of radial weights $\omega$  satisfying the doubling condition  
  $\sup_{0\le r<1} \frac{\int_r^1 \omega(s)\,ds}{\int_{\frac{1+r}{2}}^1\omega(s)\,ds}<\infty$.
  Further, we  describe the weights $\omega\in \widehat{\mathcal{D}}$ such that $H_\omega$ is bounded on  the Hardy space $H^1$, and
  we  show that for any $\omega\in \widehat{\mathcal{D}}$ and $p\in (1,\infty)$,  $H_\omega:\,L^p_{[0,1)} \to H^p$  is bounded if and only if 
  the Muckenhoupt type condition 
  \begin{equation*}
     \sup\limits_{0<r<1}\left(1+\int_0^r \frac{1}{\widehat{\omega}(t)^p} dt\right)^{\frac{1}{p}}
     \left(\int_r^1 \omega(t)^{p'}\,dt\right)^{\frac{1}{p'}} <\infty,
    \end{equation*}
    holds. Moreover, we address the analogous question  about  the action of $H_{\omega}$ on weighted Bergman 
spaces $A^p_\nu$.  
  
  \end{abstract}
\section{Introduction}
Let $\H(\D)$  denote the space of analytic functions in the unit disc $\D=\{z\in\C:|z|<1\}$.
For $0<p\le\infty$, the Hardy space $H^p$ consists of $f\in\H(\D)$ for which
    \begin{equation*}\label{normi}
    \|f\|_{H^p}=\sup_{0<r<1}M_p(r,f)<\infty
    \end{equation*}
where
    $$
    M_p(r,f)=\left (\frac{1}{2\pi}\int_0^{2\pi}
    |f(re^{i\theta})|^p\,d\theta\right )^{\frac{1}{p}},\quad 0<p<\infty,
    $$
and 
    $$
    M_\infty(r,f)=\max_{0\le\theta\le2\pi}|f(re^{i\theta})|.
    $$
For a  nonnegative function  $\om  \in L^1_{[0,1)}$, the extension to $\D$, defined by $\om(z)=\om(|z|)$ for all $z\in\D$, is called a radial weight. For $0<p<\infty$ and such an $\omega$, the Lebesgue space $L^p_\om$ consists of complex-valued measurable functions $f$ on $\D$ such that
    $$
    \|f\|_{L^p_\omega}^p=\int_\D|f(z)|^p\omega(z)\,dA(z)<\infty,
    $$
where $dA(z)=\frac{dx\,dy}{\pi}$ is the normalized area measure on $\D$. The corresponding weighted Bergman space is
 $A^p_\om=L^p_\omega\cap\H(\D)$. As usual, we write $A^p_\alpha$ for the classical weighted Bergman spaces induced by the 
standard weights $\om(z)=(\a +1)(1-|z|^2)^\alpha$, $\alpha>-1$.
 Throughout this paper we assume $\widehat{\om}(z)=\int_{|z|}^1\om(s)\,ds>0$ for all $z\in\D$, for otherwise $A^p_\om=\H(\D)$.

For any radial weight $\om$, the norm convergence in $A^2_{\om}$ implies the uniform convergence on compact subsets of $\D$, and hence
the point evaluations $L_z$ are bounded linear functionals on $A^2_\om$. Therefore there exist Bergman reproducing kernels $B^\om_z\in A^2_\om$ such that
$$ L_z(f)=f(z)=\langle f, B_z^{\om}\rangle_{A^2_\om}=\int_{\D}f(\z)\overline{B_z^{\om}(\z)}\om(\z)dA(\z),\quad f \in A^2_{\om}.$$

The kernels $z\mapsto\frac{1}{z}\int_0^z B_t^{\om}(\z)\, d\z$, $z\in\D, t\in [0,1)$, give rise to the
 Hilbert-type operator 
\begin{equation*}\label{eq:i1}
    H_{\omega}(f)(z)=\int_0^1 f(t)\left(\frac{1}{z}\int_0^z B^{\om}_t(\z)d\z\right)\,\om(t)dt.
\end{equation*}
The choice $\om=1$, gives the integral representation
\begin{equation*}\label{eq:hilbert} 
H(f)(z)=\int_{0}^1 \frac{f(t)}{1-tz}\,dt 
\end{equation*} 
 of the classical Hilbert operator \cite{DiS},
which  plays a  key role 
in  works concerning the boundedness,  the operator norm  and  the spectrum
of  $H$  \cite{AlMonSa,DiS,Di, DJV}. It is worth mentioning that these papers link the research on $H$ with different topics such as weighted composition operators,
 the Szeg\"{o} projection or Legendre functions of the first kind. We refer to \cite{GaPe2010,GaGiPeSis,PelRathg} for other generalizations of the classical Hilbert operator.

The primary purpose of this paper is to study the operator $H_{\omega}$ acting on Hardy and weighted Bergman spaces. 
 $H_{\omega}$ is an integral operator induced by the kernel $K^{\om}_t(z)=\frac{1}{z}\int_0^z B_t^{\om}(u)\, du$, so a meaningful knowledge
of these kernels would be helpful to understand the behavior of the operator. However, 
 the  lack of explicit expressions for
 $K^{\om}_t(z)$ makes difficult dealing with them, in fact this is one of the main obstacles  throughout this work.
It is well-known that   $B^\om_z(\z)=\sum \overline{e_n(z)}e_n(\z)$ for each orthonormal basis $\{e_n\}$ of $A^2_\om$, and therefore 
\begin{equation}\label{eq:B}
B^\om_z(\z)=\sum_{n=0}^\infty\frac{\left(\overline{z}\z\right)^n}{2\om_{2n+1}}, \quad z,\z\in \D,
\end{equation}
 using the basis
 induced by the normalized monomials. Here $\om_{2n+1}$ are the odd moments of $\om$, and in general from now on we write $\om_x=\int_0^1r^x\om(r)\,dr$ for all $x\ge0$.

As for Hardy spaces, the classical Hilbert operator $H$ is not bounded on $H^\infty$. 
This is a general phenomenon rather than a particular case. 
 Indeed, if $\om$ is a radial weight and $x\in (0,1)$
\begin{align*}
  H_{\omega}(1)(x)
  =\sum\limits_{n=0}^{\infty}\frac{\om_n}{2\om_{2n+1}(n+1)}x^n
  &\geq \frac{1}{2x}\sum\limits_{n=0}^{\infty}\frac{x^{n+1}}{n+1}=\frac{1}{2x}\log\left(\frac{1}{1-x}\right), 
\end{align*}
so  $H_{\omega}$ is not bounded on $H^\infty$.
However, the classical Hilbert operator 
 is bounded from $H^\infty$ to
 the Bloch space $\mathcal{B}$, which consists of the functions $f\in \H(\D)$ such that 
 $\| f\|_{\mathcal{B}}=|f(0)|+\sup_{z\in\D}(1-|z|^2)|f'(z)|<\infty$ \cite{Zhu}. Our first theorem
 describes the radial 
 weights such that $H_\om:\, H^\infty\to \mathcal{B}$ is bounded. 
We need some notation to state this result. A radial weight $\om$ belongs to the class~$\DD$ if the tail integral $\widehat{\om}$ satisfies the doubling property $\widehat{\om}(r)\le C\widehat{\om}(\frac{1+r}{2})$ for some constant $C=C(\om)\ge1$ and for all $0\le r<1$.
\begin{theorem}\label{th:infty}
Let $\om$ be a radial weight. Then,  $H_{\omega}: H^{\infty}\to \B$ is bounded if and only if  $\om\in \DD$.
\end{theorem}

The class~$\DD$ also arises in other natural questions on operator theory on spaces of analytic functions. For instance, 
it describes the radial weights $\om$ such that the Littlewood-Paley inequality 
\begin{equation*}
  |f(0)|^p+\int_{\D}|f'(z)|^p(1-|z|)^p\om(z)\,dA(z)\le C\|f\|^p_{A^p_\om},\quad f\in\H(\D),
	\end{equation*} 
 holds \cite[Theorem~6]{PR19}, or the radial weights such that the orthogonal Bergman projection 
 \begin{equation*}
    P_\om(f)(z)=\int_{\D}f(\z) \overline{B^\om_{z}(\z)}\,\om(\z)dA(\z),
    \end{equation*}
is bounded from $L^\infty$ to $\mathcal{B}$ \cite[Theorem~1]{PR19}.

 Next, let us recall that the classical Hilbert operator is not bounded on $H^1$  \cite{DiS}. Regarding the action of $H_\om$ on $H^1$,  
 we have  
characterized  the upper doubling weights $\om$
such that $H_\om$ is bounded on $H^1$.

\begin{theorem}
\label{th:h1h1}
Let $\om  \in \DD$. Then, the  following statements sare equivalent:
\begin{itemize}
    \item[(i)] $H_{\om }: H^1\to H^1$ is bounded;

    \item[(ii)] The measure $\mu_\om(z)= \om(z)\left(1+\int_0^{|z|} \frac{ds}{\omg(s)}\right)\,\chi_{[0,1)}(z)\,  dA(z)$ is a $1$-Carleson measure for $H^1$;
    \item[(iii)]  $\om$ satisfies the condition 
    \begin{equation}
        \label{condicion H1 H1}
        \sup\limits_{a \in [0,1)}
        \frac{1}{1-a}\int_a^1 \om(t)\left(1+\int_0^t \frac{ds}{\omg(s)}\right)\,dt<\infty.
    \end{equation}    
\end{itemize}
\end{theorem}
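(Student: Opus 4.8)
The plan is to reduce the whole statement to a sharp two‑sided estimate for the $H^1$‑norm of the kernel $K_t^{\om}(z)=\frac1z\int_0^z B_t^{\om}(u)\,du$. The substitution $u=sz$ rewrites this as a superposition of dilated reproducing kernels $K_t^{\om}(z)=\int_0^1 B_t^{\om}(sz)\,ds$, and the estimate I would isolate is
$$\|K_t^{\om}\|_{H^1}\asymp 1+\int_0^t\frac{ds}{\omg(s)},\qquad 0\le t<1,\quad \om\in\DD.$$
This is consistent with the classical case $\om\equiv 1$, where $K_t^{1}(z)=(1-tz)^{-1}$ and $\|K_t^{1}\|_{H^1}\asymp\log\frac1{1-t}$. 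Once it is available, the density of $\mu_{\om}$ equals $\|K_t^{\om}\|_{H^1}\,\om(t)$ up to constants, which ties the three conditions together.

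To prove the estimate I would first record, from the theory of $\DD$‑weights, the moment comparison $\om_{2n+1}\asymp\om_n\asymp\omg\!\left(1-\tfrac1{n+1}\right)$ and the resulting sharp integral means $M_1(\rho,B_t^{\om})\asymp\omg(t\rho)^{-1}$ (which reduces to $(1-t\rho)^{-(1+\alpha)}$ for the standard weights). For the upper bound, Minkowski's integral inequality applied to $K_t^{\om}(z)=\int_0^1 B_t^{\om}(sz)\,ds$ and the monotonicity of integral means give $\|K_t^{\om}\|_{H^1}\le\int_0^1 M_1(s,B_t^{\om})\,ds\asymp\int_0^1\frac{ds}{\omg(ts)}=\frac1t\int_0^t\frac{dv}{\omg(v)}$, and the doubling of $\omg$ turns the last quantity into $1+\int_0^t\frac{ds}{\omg(s)}$. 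The matching lower bound is more delicate: since $K_t^{\om}$ and $B_t^{\om}$ have nonnegative Taylor coefficients, a single angular sector recovers only $(1-t\rho)K_t^{\om}(\rho)$, which undershoots the logarithmic growth; instead I would estimate $M_1(\rho,K_t^{\om})$ by summing the contributions of all dyadic angular scales $|\theta|\sim 2^{-k}$, each comparable to one unit of $\int_0^t\frac{ds}{\omg(s)}$, the positivity of the coefficients guaranteeing the absence of cancellation across scales.

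The equivalence (ii)$\Leftrightarrow$(iii) is the $H^1$‑Carleson characterization specialized to a measure carried by the radius: a Carleson box $S(I)$ meets $\supp\mu_{\om}\subset[0,1)$ in the interval $[a,1)$ exactly when $I$ is the arc of length $1-a$ centered at $1$, so the box supremum collapses to the supremum in \eqref{condicion H1 H1}. Granting the kernel estimate, the implication (ii)$\Rightarrow$(i) is then immediate from Minkowski's inequality,
$$\|H_{\om}(f)\|_{H^1}\le\int_0^1|f(t)|\,\|K_t^{\om}\|_{H^1}\,\om(t)\,dt\asymp\int_0^1|f(t)|\,d\mu_{\om}(t)\lesssim\|f\|_{H^1},$$
the final step being precisely the Carleson embedding (ii).

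The remaining implication (i)$\Rightarrow$(ii) is where I expect the main difficulty. I would reduce it to the reverse inequality $\|H_{\om}(f)\|_{H^1}\gtrsim\int_0^1 f(t)\|K_t^{\om}\|_{H^1}\om(t)\,dt$, valid for $f$ with nonnegative Taylor coefficients, and then test on $f_a(z)=\frac{1-a^2}{(1-az)^2}$, which satisfies $\|f_a\|_{H^1}\asymp 1$ and $f_a(t)\asymp(1-a)^{-1}$ for $t\in[a,1)$; boundedness of $H_{\om}$ would then force $(1-a)^{-1}\int_a^1\|K_t^{\om}\|_{H^1}\om(t)\,dt\lesssim 1$, that is, \eqref{condicion H1 H1}. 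The reverse inequality is a form of reverse Minkowski that fails for general superpositions, and extracting it is the crux: one cannot capture the growth of $\int_a^1\|K_t^{\om}\|_{H^1}\om(t)\,dt$ with a single dual function, since the integrand receives comparable contributions from a whole range of frequency scales as $t$ runs over $[a,1)$. I would resolve this exactly as in the lower bound of the kernel estimate, exploiting that $H_{\om}(f)$ has nonnegative coefficients so that the angular scales add without cancellation, while the doubling of $\omg$ ensures that the scales relevant to different $t\in[a,1)$ overlap consistently. This closes the chain of equivalences.
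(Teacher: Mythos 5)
Your skeleton (an $H^1$-norm estimate for $K_t^{\om}$, the Carleson-box characterization for (ii)$\Leftrightarrow$(iii), Minkowski plus the Carleson embedding for sufficiency, and the test functions $f_a$ for necessity) is the same as the paper's, and the target estimate $\|K_t^{\om}\|_{H^1}\asymp 1+\int_0^t\frac{ds}{\omg(s)}$ is correct. However, two of your key steps do not hold up as written. First, the intermediate claim $M_1(\rho,B_t^{\om})\asymp\omg(t\rho)^{-1}$ is false for general $\om\in\DD$: the correct estimate (Lemma~\ref{le:Gt} with $q=1$) is $M_1(\rho,B_t^{\om})\asymp 1+\int_0^{\rho t}\frac{dx}{\omg(x)(1-x)}$, and for $q=1$ this integral cannot be collapsed to $\omg(\rho t)^{-1}$ (take $\omg(r)\asymp 1/\log\frac{e}{1-r}$, which is in $\DD$; then the integral grows like $\log^2\frac{e}{1-\rho t}$ while $\omg(\rho t)^{-1}\asymp\log\frac{e}{1-\rho t}$). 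Your Minkowski computation of the upper bound can be repaired --- inserting the correct estimate and integrating in $s$ still yields $1+\int_0^t\frac{ds}{\omg(s)}$ because $\int_0^t\frac{t-x}{\omg(x)(1-x)}\,dx\le\int_0^t\frac{dx}{\omg(x)}$ --- but as stated the step is not justified. For the lower bound, your ``sum over dyadic angular scales'' is only a sketch; the paper gets it in one line from Hardy's inequality $\sum_n\frac{|\widehat{g}(n)|}{n+1}\lesssim\|g\|_{H^1}$ applied to the nonnegative coefficients of $K_t^{\om}$.

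The more serious gap is in (i)$\Rightarrow$(iii). You correctly identify that one needs a lower bound of reverse-Minkowski type for $\|H_{\om}(f_a)\|_{H^1}$, but you leave its proof to the same unexecuted dyadic-scales heuristic. The paper's device, which you are missing, is the Fej\'er--Riesz inequality: $\|H_{\om}(f_a)\|_{H^1}\gtrsim\int_0^1|H_{\om}(f_a)(x)|\,dx$, and on the radius everything is nonnegative, so Fubini gives
\begin{equation*}
\int_0^1 H_{\om}(f_a)(x)\,dx=\int_0^1 f_a(s)\,\om(s)\left(\int_0^1 K^{\om}_s(x)\,dx\right)ds,
\end{equation*}
after which the \emph{radial} kernel estimate $K^{\om}_s(x)\asymp 1+\int_0^{sx}\frac{du}{\omg(u)(1-u)}$ (Lemma~\ref{Estimaciones integral real nucleos}), together with $1+\int_0^{sx}\frac{du}{\omg(u)(1-u)}\gtrsim\frac{1}{\omg(x)}$ for $x<s$, produces both pieces of \eqref{condicion H1 H1}. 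No reverse Minkowski inequality in $H^1$ is needed; the restriction to $[0,1)$ does all the work. Without Fej\'er--Riesz (or an equivalent substitute) your necessity argument is incomplete.
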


For a given Banach space (or a complete metric
space) $X$ of analytic functions on $\D$, a positive Borel measure
$\mu$ on $\D$ is called a $q$-Carleson measure for $X$
 if the identity 
operator $I_d:\, X\to L^q(\mu)$\index{$I_d$} is bounded. 
Carleson provided a geometric description of $p$-Carleson mesures for Hardy spaces $H^p$, $0<p<\infty$, en route of his celebrated Corona theorem \cite{CarlesonL58, CarlesonL62}
or \cite[Chapter~9]{Duren}.

The condition \eqref{condicion H1 H1} implies a restriction in the growth/decay of  $\om$, for instance
the standard weight $(1-|z|^2)^\alpha$  satisfies  \eqref{condicion H1 H1} if and only if $\alpha>0$.
There are two key steps in the proof of Theorem~\ref{th:h1h1}. The first one consists in obtaining estimates for the integral means  
$M_1(\rho, K^\om_t)$, see Lemma~\ref{triangulo} below. 
The second one will be  used 
 repeatedly throughout  this paper,  and
describes the growth of  the kernel $K^\om_s$ in the interval $[0,1)$, 
$$ K^\om_s(t) \asymp 1+\int_0^{st} \frac{1}{\omg(x)( 1-x)} dx, \quad 0\le t, s<1.$$

Next, we will study the operator $H_\om$ acting on Hardy spaces $H^p$, $1<p<\infty$. In order to state our result concerning
this question, we define 
 the Dirichlet-type space $D^p_{p-1}$
of $f\in\H(\D)$ such that
$\| f\|^p_{D^p_{p-1}}=|f(0)|^p+\| f'\|^p_{A^p_{p-1}}<\infty,$
 and the Hardy-Littlewood space $HL(p)$ which consists of the $f(z)=\sum\limits_{n=0}^{\infty} \widehat{f}(n) z^n\in \H(\D)$ such that
$ \nm{f}^p_{HL(p)}=\sum\limits_{n=0}^{\infty} \abs{\widehat{f}(n)}^p (n+1)^{p-2}<\infty.$
These spaces satisfy the well-known inclusions \cite{Duren,Flett,LP}         
\begin{equation}\label{eq:HpDpp<2}
D^p_{p-1}\subset H^p\subset HL(p),\quad 0<p\le 2,
\end{equation}
and
\begin{equation}\label{eq:HpDpp>2}
HL(p)\subset H^p\subset D^p_{p-1} ,\quad 2\le p<\infty,
\end{equation}
and  are closely related to Hardy spaces. In fact, 
a univalent function $f$ belongs to 
 $H^p$ if and only if $ f\in D^p_{p-1}$ \cite{BGP}, and  $$f\in H^p\Leftrightarrow f \in  HL(p) \Leftrightarrow f \in D^p_{p-1}, \quad 1\le p<\infty, $$
 for 
any $f \in \H(\D)$  whose sequence of Taylor coefficients  is positive and decreasing
\cite[Proof of Theorem~A]{Pavdec}.  

For a radial weight $\om$ and $0<p<\infty$, let $L^p_{\om, [0,1)}$ be the Lebesgue space of measurable functions such that 
$\|f\|^p_{L^p_{\om, [0,1)}}=\int_0^1 |f(t)|^p\om(t)\,dt<\infty$. If $\om=1$ we simply write $L^p_{[0,1)}$ instead  of $L^p_{[0,1), 1}$.
Our next result 
describes the weights $\om \in \DD$ such that $H_{\omega}:L^p_{[0,1)} \to X$ is bounded,
where $X= H^p$, $D^p_{p-1}$ or $HL(p)$,
and consequently bearing in mind  the  estimate \cite[p. 127]{Pomm}
\begin{equation}\label{eq:Hprestriction}
\int_0^s M^p_\infty(t,f)\,dt \le \pi M^p_p(s,f),\quad 0<p<\infty,\quad f\in \H(\D),
\end{equation}
we obtain a sharp sufficient condition so that $H_\om$ is bounded on $H^p$.

\begin{theorem}\label{th:lphp}
Let  $\om \in \DD$ and $1<p<\infty$. Then, the following statements are equivalent:
\begin{itemize}
    \item[(i)] $H_{\omega}:L^p_{[0,1)} \to H^p$ is bounded;
     \item[(ii)] $H_{\omega}:L^p_{[0,1)} \to D^p_{p-1}$ is bounded;
     \item[(iii)] $H_{\omega}:L^p_{[0,1)} \to HL(p)$ is bounded;
    \item[(iv)]  $\om$ satisfies the condition
    \begin{equation}\label{eq:i1}
     \sup\limits_{0<r<1}\left(1+\int_0^r \frac{1}{\omg(t)^p} dt\right)^{\frac{1}{p}}
     \left(\int_r^1 \om(t)^{p'}\,dt\right)^{\frac{1}{p'}} <\infty.  
    \end{equation}
    \end{itemize}
In particular if  \eqref{eq:i1} holds, $H_{\omega}: H^p \to H^p$ is bounded.
\end{theorem}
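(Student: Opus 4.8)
The plan is to prove the chain of equivalences $(i)\Leftrightarrow(ii)\Leftrightarrow(iii)\Leftrightarrow(iv)$ by exploiting the fact that $H_\omega$ sends the one-real-variable function $f$ on $[0,1)$ to an analytic function whose Taylor coefficients are explicit moments, and then passing through the coefficient space $HL(p)$ as the natural intermediary. Concretely, for $f$ supported on $[0,1)$ we compute
\begin{equation*}
H_\omega(f)(z)=\sum_{n=0}^\infty \widehat{H_\omega(f)}(n)\,z^n,\qquad \widehat{H_\omega(f)}(n)=\frac{1}{2\omega_{2n+1}(n+1)}\int_0^1 f(t)t^n\,\omega(t)\,dt,
\end{equation*}
using \eqref{eq:B} and integrating $B^\omega_t$ term by term. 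Thus $H_\omega(f)\in HL(p)$ exactly when the sequence $a_n=(n+1)^{-1}\omega_{2n+1}^{-1}\int_0^1 f(t)t^n\omega(t)\,dt$ lies in the weighted $\ell^p$ space with weights $(n+1)^{p-2}$. The equivalence $(i)\Leftrightarrow(iii)$ for this class of data should then follow from a Hardy--Littlewood--type comparison: because the output has a positive and essentially monotone coefficient behaviour whenever $f\ge 0$, the inclusions \eqref{eq:HpDpp<2}--\eqref{eq:HpDpp>2} between $H^p$, $D^p_{p-1}$ and $HL(p)$ collapse to genuine equivalences on the range of $H_\omega$, which simultaneously yields $(ii)$.

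The heart of the matter is therefore $(iii)\Leftrightarrow(iv)$, i.e.\ characterizing when the operator $f\mapsto\bigl((n+1)^{(p-2)/p}a_n\bigr)_n$ is bounded $L^p_{[0,1)}\to\ell^p$. The idea is to read this as a discrete Hardy-type (averaging) operator and to invoke a Muckenhoupt-type weighted-norm criterion. First I would use the regularity of $\omega\in\DD$ to replace the moments by tail integrals, namely $\omega_{2n+1}\asymp \int_{1-1/n}^1\omega\asymp\widehat{\omega}(1-1/n)$ and more precisely $\omega_x\asymp\widehat{\omega}(1-1/x)$ for $\DD$ weights; this is the standard bridge between moments and $\widehat{\omega}$ that makes the doubling hypothesis indispensable. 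After this substitution the kernel $t^n/\omega_{2n+1}$ becomes comparable, under the change of variables $1-t\sim 1/n$, to the growth of $K^\omega_t$ described in the excerpt, and the summation in $n$ turns into the integral $\int_0^r \omega^{-p}$ appearing in \eqref{eq:i1}. The boundedness question then reduces to a one-dimensional weighted inequality of the form
\begin{equation*}
\int_0^1\Bigl|\int_0^t f(s)\,\omega(s)\,ds\Bigr|^p\,d\mu(t)\lesssim \int_0^1|f(t)|^p\,dt,
\end{equation*}
whose validity is governed precisely by a Muckenhoupt balance between $\bigl(1+\int_0^r\omega^{-p}\bigr)^{1/p}$ and $\bigl(\int_r^1\omega^{p'}\bigr)^{1/p'}$.

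The main obstacle I anticipate is making the two-sided comparison between the discrete sum $\sum_n (n+1)^{p-2}|a_n|^p$ and the continuous weighted integral sharp enough to recover the supremum in \eqref{eq:i1} with matching constants in both directions. The easy direction (sufficiency of \eqref{eq:i1}) should follow from a duality/testing argument or directly from the classical Muckenhoupt theorem once the kernel is estimated from above; the delicate direction is necessity, where one must produce near-extremal test functions $f_r$ concentrated near a given radius $r$ and show that the resulting lower bound forces \eqref{eq:i1}. Here the $\DD$ hypothesis is used a second time, to guarantee that $\widehat{\omega}$ does not oscillate, so that the test functions $f_r=\chi_{[r,1)}\,\omega^{p'-1}$ (or a dyadic variant) genuinely saturate the inequality. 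Finally, the last assertion---boundedness of $H_\omega$ on $H^p$ itself---is obtained for free by combining $(i)$ with the restriction estimate \eqref{eq:Hprestriction}, which controls $\int_0^1 M_\infty^p(t,f)\,dt$ by $\|f\|_{H^p}^p$ and hence dominates the $L^p_{[0,1)}$-norm of the boundary restriction of an $H^p$ function, so that $H^p\hookrightarrow L^p_{[0,1)}$ on radii feeds directly into statement $(i)$.
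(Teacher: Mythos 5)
Your overall architecture --- the explicit Taylor coefficients $\widehat{H_\omega(f)}(n)=\frac{1}{2(n+1)\omega_{2n+1}}\int_0^1 f(t)t^n\omega(t)\,dt$, the bridge $\omega_x\asymp\widehat{\omega}(1-1/x)$ from Lemma~\ref{caract. pesos doblantes}, Muckenhoupt-type weighted Hardy inequalities, the test functions $\chi_{[r,1)}\omega^{p'-1}$ for necessity, and \eqref{eq:Hprestriction} for the final assertion --- coincides with the paper's, but two of your steps have genuine gaps. First, the collapse of $H^p$, $D^p_{p-1}$ and $HL(p)$ on the range of $H_\omega$ cannot be deduced from ``positive and essentially monotone coefficient behaviour''. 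Since $1/\omega_{2n+1}\asymp 1/\widehat{\omega}(1-1/n)$ may grow like $n^{\beta}$ with $\beta>1$ (take $\omega(t)=(1-t)^{\alpha}$ with $\alpha>0$ and $f=\chi_{[r,1)}$ with $r$ near $1$: the coefficients then increase like $(1-r)^{\alpha+1}n^{\alpha}$ on the long initial segment $n\lesssim (1-r)^{-1}$), the coefficient sequence is not essentially decreasing with uniform constants, so Pavlovi\'c's theorem for decreasing coefficients does not apply. The paper proves this comparability in Lemma~\ref{equivalencia norma} via the Mateljevi\'c--Pavlovi\'c decomposition norm for $D^p_{p-1}$ combined with the dyadic-block multiplier Lemma~\ref{lema de los lambda}, which only needs $\omega_n\asymp\omega_{2n}$, i.e. $\omega\in\widehat{\mathcal{D}}$; your argument needs some substitute for this machinery.

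Second, in the sufficiency direction you reduce everything to a single weighted Hardy inequality, displayed with inner integral $\int_0^t f(s)\omega(s)\,ds$, and assert it is governed ``precisely'' by the balance in \eqref{eq:i1}. By Muckenhoupt's theorems the head inequality (inner integral $\int_0^t$) and the tail inequality (inner integral $\int_t^1$) are governed by two \emph{different} pairs of conditions, and only the tail one is literally \eqref{eq:i1}. The paper has to split the kernel into the ranges $t<r$ and $t>r$, apply Muckenhoupt's Theorem~2 to the tail part (which yields \eqref{eq:i1} directly) and Muckenhoupt's Theorem~1 to the head part, whose condition is \eqref{eq:hp2}; deducing \eqref{eq:hp2} from \eqref{eq:Hpcondition} is a separate, nontrivial step (an integration by parts together with \cite[Lemma~2.3]{PPR18}). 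Without that derivation your sufficiency proof is incomplete. By contrast, the necessity half of your plan (Fej\'er--Riesz or the coefficient lower bound, then the stated test functions, using $K^\omega_s(t)\gtrsim 1/\widehat{\omega}(t)$ for $s>r>t$) matches the paper and is sound, as is the deduction of the final assertion from (i) and \eqref{eq:Hprestriction}.
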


In the proof of the equivalences (i)$\Leftrightarrow$(ii)$\Leftrightarrow$(iii) in Theorem~\ref{th:lphp}, we employ a decomposition norm
for the space $D^p_{p-1}$  \cite[Theorem~2.1]{MatPav} to show that 
$$  \nm{H_{\omega} (f)}_{H^p}\asymp \nm{ H_{\omega} (f)}_{D^p_{p-1}}\asymp \nm{H_{\omega} (f)}_{HL(p)}$$ 
for any non-negative $f\in L^1_{\om,[0,1)}$.
The proof of the equivalence  (i)$\Leftrightarrow$(iv) in Theorem~\ref{th:lphp} is quite similar to the proof 
of the next result, which deals with the action of $H_\om$ on weighted Bergman spaces $A^p_\nu$.

\begin{theorem}
\label{acotacion en espacios de Bergman}
Let $1<p<\infty$, $\om\in \DD$ and $\nu$ a radial weight on $\D$. Then,  $H_{\omega}:L^p_{\nug,[0,1)} \to A^p_\nu$ is bounded if and only if
    \begin{equation}
    \label{condicion 1}
     \sup\limits_{0<r<1}\left(1+\int_0^r \frac{\nug(t)}{\omg(t)^p} dt\right)^{\frac{1}{p}}
     \left(\int_r^1 \left(\frac{\om(t)}{\nug(t)^{\frac{1}{p}}}\right)^{p'}\,dt\right)^{\frac{1}{p'}} <\infty.  
    \end{equation}
    In particular if  \eqref{condicion 1} holds, $H_{\omega}:A^p_\nu  \to A^p_\nu$  is bounded.
\end{theorem}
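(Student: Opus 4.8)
\emph{Strategy.} I would exploit the positivity of the kernel to reduce to nonnegative data on $[0,1)$, read off from the growth estimate $K^\om_s(t)\asymp 1+\int_0^{st}\frac{dx}{\omg(x)(1-x)}$ a one--dimensional Hardy-type functional, and finally recognise \eqref{condicion 1} as the Muckenhoupt $A_p$-condition of the associated weighted Hardy inequality. Throughout set $\Phi(s)=\int_0^s\frac{dx}{\omg(x)(1-x)}$ and, for $g\ge0$, $G(t)=\int_t^1 g(s)\om(s)\,ds$.

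\emph{Reduction to the radius.} By \eqref{eq:B} the kernel $K^\om_t(z)=\sum_n\frac{t^nz^n}{2\om_{2n+1}(n+1)}$ has nonnegative Taylor coefficients for each $t\in[0,1)$, so the coefficients of $H_\om(f)$ are dominated in modulus by those of $H_\om(|f|)$; hence it suffices to treat $f=g\ge0$. For $\om\in\DD$ these coefficients vary regularly, and the first --- and most delicate --- point is that the genuinely two--dimensional norm of the positive-coefficient function $H_\om(g)$ localises to its values on $[0,1)$ with the sharp weight, namely
$$\nm{H_\om(g)}_{A^p_\nu}^p\asymp \int_0^1 H_\om(g)(r)^p\,\nu(r)(1-r)\,dr,$$
equivalently $M_p(r,H_\om(g))\asymp(1-r)^{1/p}H_\om(g)(r)$. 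Here the factor $(1-r)^{1/p}$ is essential: replacing $M_p$ by the trivial majorant $M_\infty(r,\cdot)=H_\om(g)(r)$ drops this factor and produces a strictly stronger (hence incorrect) requirement, so one cannot avoid using the regular variation of the coefficients forced by $\om\in\DD$.

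\emph{Extracting the Hardy functional.} Feeding the kernel estimate into the right-hand side above, and using $1-rt\asymp\max\{1-r,1-t\}$ together with the dyadic near-constancy of $\omg$ for $\om\in\DD$, a Fubini rearrangement gives $\int_0^1 g(t)\Phi(rt)\om(t)\,dt=\int_0^1\frac{r\,G(y)}{\omg(ry)(1-ry)}\,dy$, so that the displayed integral collapses, after matching the coupled weight $\omg(ry)$ to $\omg(y)$ by the $\DD$ property, to
$$\nm{H_\om(g)}_{A^p_\nu}^p\asymp G(0)^p\!\int_0^1\!\nu(r)(1-r)\,dr+\int_0^1 G(t)^p\,\frac{\nug(t)}{\omg(t)^p}\,dt.$$
Since $G(t)=\int_t^1 g\om$, the classical two-weight Hardy inequality (Muckenhoupt--Bradley) asserts that $\int_0^1 G(t)^p\frac{\nug(t)}{\omg(t)^p}\,dt\lesssim\int_0^1 g^p\nug$ for all $g\ge0$ iff $\sup_r\big(\int_0^r\frac{\nug}{\omg^p}\big)^{1/p}\big(\int_r^1(\om/\nug^{1/p})^{p'}\big)^{1/p'}<\infty$, which is exactly \eqref{condicion 1}, the constant term being absorbed by the ``$1+$''. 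This gives (i)$\Leftrightarrow$(ii). For the necessity I would instead argue directly, testing $H_\om$ on the Hardy extremals $g_a=\om^{p'-1}\nug^{-p'/p}\chi_{[a,1)}$ and bounding $\nm{H_\om(g_a)}_{A^p_\nu}$ from below on a sector $|\arg z|\lesssim 1-|z|$, where $|H_\om(g_a)(z)|\asymp H_\om(g_a)(|z|)$.

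\emph{Main obstacle and the final assertion.} The hard part I foresee is the localisation in the second paragraph: the $A^p_\nu$-norm of a positive-coefficient function is not controlled by its radial profile alone, and recovering the correct weight $(1-r)$ --- and then decoupling $\omg(ry)$ from $\omg(y)$ --- is precisely where $\om\in\DD$ is used; everything afterwards is the standard Muckenhoupt calculus. Finally, the ``in particular'' follows from the restriction estimate $\int_0^1|f(t)|^p\nug(t)\,dt\lesssim\nm{f}_{A^p_\nu}^p$, obtained by integrating \eqref{eq:Hprestriction} against $\nu$ and applying Fubini; thus $A^p_\nu$ embeds into $L^p_{\nug,[0,1)}$ and the boundedness of $H_\om:A^p_\nu\to A^p_\nu$ is inherited from that of $H_\om:L^p_{\nug,[0,1)}\to A^p_\nu$.
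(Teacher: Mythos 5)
Your necessity argument (testing on functions supported on $[a,1)$ and using the pointwise kernel estimate of Lemma~\ref{Estimaciones integral real nucleos}) and your treatment of the final assertion via the restriction estimate \eqref{eq:lAplpnu} are essentially the paper's. The sufficiency half, however, breaks down at the step you yourself flag as the most delicate one: the claimed localisation
$M_p\bigl(r,H_\om(g)\bigr)\asymp(1-r)^{1/p}H_\om(g)(r)$ is false. The direction you actually need for sufficiency is $M_p^p(r,F)\lesssim (1-r)M_\infty^p(r,F)$ for $F=H_\om(g)$, and this fails already in the model case $\om\equiv1$: there $K_t^\om(z)=\frac{1}{1-tz}$, so taking $g$ concentrated near $t=\tfrac12$ gives $F(z)\asymp\frac{1}{1-z/2}$, for which $M_p(r,F)\asymp1$ while $(1-r)^{1/p}F(r)\to0$ as $r\to1$. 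A positive-coefficient function simply need not have its $L^p$ circle means concentrated on an arc of length $1-r$; that heuristic is valid only for functions growing at least like a power of $\frac{1}{1-r}$, and $H_\om(g)$ can be bounded or logarithmic. Since everything downstream (the Fubini rearrangement, the identification with the Muckenhoupt condition) is fed through this false equivalence, the sufficiency proof does not survive.

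The paper avoids the issue by never localising the area integral to the radius. Instead it uses the Littlewood--Paley-type inequality of Lemma~\ref{sustituto lit.Pal},
$\nm{f}_{A^p_\nu}^p\lesssim\abs{f(0)}^p\nug(0)+\int_0^1M_q^p(t,f')(1-t)^{p(1-\frac1q)}\nug(t)\,dt$ with $1<q<p$, then Minkowski's integral inequality to pull the $M_q$-mean inside the integral against $g$, and the estimate $M_q^q(r,G^\om_t)\asymp\frac{1}{\omg(rt)^q(1-rt)^{q-1}}$ from Lemma~\ref{le:Gt}. Splitting the inner integral at $t=r$ and using Lemma~\ref{caract. pesos doblantes} to decouple $\omg(rt)$ then reduces matters to \emph{two} weighted Hardy inequalities: one whose Muckenhoupt condition is exactly \eqref{condicion 1}, and a second (condition \eqref{condicion 2}) which is shown to follow from \eqref{condicion 1} by an integration by parts. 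If you want to salvage your outline, replace the radial localisation by this derivative-based route; the ``standard Muckenhoupt calculus'' you invoke then does apply, but to the pair of Hardy functionals above rather than to a single one.
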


The last assertion in the statement of Theorem~\ref{acotacion en espacios de Bergman} follows 
from the equivalence between (i) and (ii) and the inequality
\begin{equation}\label{eq:lAplpnu}
\int_0^1 |f(t)|^p\widehat{\nu}(t)\,dt \le \pi \int_0^1 M_\infty(t,f)^p\widehat{\nu}(t)\,dt \le \frac{\pi}{2}\| f\|^p_{A^p_\nu}, \quad f\in \H(\D),
\end{equation}
which can be obtained
integrating \eqref{eq:Hprestriction}.

It is worth mentioning that Theorem~\ref{acotacion en espacios de Bergman}  generalizes and improves \cite[Theorem~2]{PelRathg},
because 
we do not need to assume any integrability condition
on the weights  similar to \cite[(1.2)]{PelRathg} neither assuming that  $\nu$ is a regular weight \cite[(2.1)]{PelRathg}.
The proof of (ii)$\Rightarrow$(i)  in Theorem~\ref{acotacion en espacios de Bergman} draws on the inequality
\begin{equation}\label{eq:i2}
 \nm{f}_{A^p_{\nu}}^p\lesssim \abs{f(0)}^p\nug(0)+\int_0^1M_q^p(t,f')(1-t)^{p\left(1-\frac{1}{q}\right)}\nug(t)dt, \quad f\in \H(\D),
 \end{equation}
which is valid for any $1<q<p<\infty$ and  any radial weight $\nu$. Indeed, \eqref{eq:i2} allows both: to get rid of the integral 
in the expression of $K^{\om}_t$ and then use estimates for the Bergman reproducing kernels previously obtained in \cite[Theorem~1]{PelRatproj}.
This approach leads to a couple of classical Hardy inequalities, one of them follows from \eqref{condicion 1}
and the other one holds whenever
\begin{equation*}
    \sup\limits_{0<r<1}(1-r)^{\frac{1}{p}}\nug(r)^{\frac{1}{p}}\left(\int_0^r \left(\frac{\om(t)}{\nug(t)^{\frac{1}{p}}}\right)^{p'}\frac{1}{\omg(t)^{p'}}dt \right)^{\frac{1}{p'}}<\infty,
\end{equation*}
which is a condition that can be deduced from \eqref{condicion 1}.

We also point out that the statements  of Theorems~\ref{th:infty}-\ref{acotacion en espacios de Bergman} remain true if the
Hilbert type operator $H_\om$ is replaced by its
 sublinear version
\begin{equation*}
    \widetilde{H_{\omega}}(f)(z)=\int_0^1 \abs{f(t)}\left(\frac{1}{z}\int_0^z B^{\om}_t(\z)d\z\right)\om(t)dt.
\end{equation*}

Finally, in Section \ref{sec:fc}, we highlight a connection between the actions of the Hilbert-type operator $H_\om$ 
and the Bergman projection $P_\om$ on Lebesgue spaces.
The rest  of the paper is organized as follows. In Section~\ref{sec:estimates} we will deal with 
pointwise and norm estimates of the kernels. Theorem~\ref{acotacion en espacios de Bergman} is proved in Section~\ref{sec:Bergman}
and Section~\ref{sec:Hardy} is devoted to proving Theorems~\ref{th:infty}, \ref{th:h1h1} and \ref{th:lphp}.

The letter $C=C(\cdot)$ will denote an absolute constant whose value depends on the parameters indicated
in the parenthesis, and may change from one occurrence to another.
We will use the notation $a\lesssim b$ if there exists a constant
$C=C(\cdot)>0$ such that $a\le Cb$, and $a\gtrsim b$ is understood
in an analogous manner. In particular, if $a\lesssim b$ and
$a\gtrsim b$, then we write $a\asymp b$ and say that $a$ and $b$ are comparable.

 This notation has already been used above in the introduction.
 
\section{Pointwise and norm estimates of  kernels}\label{sec:estimates}
We will repeatedly use several characterizations 
of weights in the class $\DD$. They are gathered in the following lemma,
see \cite[Lemma~2.1]{PelSum14} for a proof.

\begin{letterlemma}
\label{caract. pesos doblantes}
Let $\om$ be a radial weight on $\D$. Then, the following statements are equivalent:
\begin{itemize}
    \item[(i)] $\om \in \DD$;
    \item[(ii)] There exist $C=C(\om)\geq 1$ and $\b_0=\b_0(\om)>0$ such that
    $$ \omg(r)\leq C \left(\frac{1-r}{1-t}\right)^{\b}\omg(t), \quad 0\leq r\leq t<1;$$
    for all $\b\geq \b_0$.
    \item[(iii)]
    $$ \int_0^1 s^x \om (s) ds\asymp \omg\left(1-\frac{1}{x}\right),\quad x \in [1,\infty);$$
    \item[(iv)] There exists $C(\om)>0$ such that $\om_n\le C \om_{2n}$, for any $n\in \N$.
\end{itemize}
\end{letterlemma}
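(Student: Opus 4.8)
The plan is to treat the tail--doubling condition (i) as the hub and to route everything through the moment asymptotics (iii), relying on two elementary facts valid for \emph{every} radial weight: the integration--by--parts identity $\om_x=x\int_0^1 s^{x-1}\omg(s)\,ds$, and the lower bound $\om_x\ge(1-1/x)^x\omg(1-1/x)\gtrsim\omg(1-1/x)$ for $x\ge2$ (since $(1-1/x)^x$ is bounded below). In these terms the content of (iii) is exactly the \emph{reverse} inequality $\om_x\lesssim\omg(1-1/x)$, so the real work is to manufacture this upper bound from a doubling hypothesis, and conversely to recover doubling from it.

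First I would settle (i)$\Leftrightarrow$(ii) by iteration. For (i)$\Rightarrow$(ii), setting $r_0=r$, $r_{k+1}=\frac{1+r_k}{2}$ gives $1-r_k=2^{-k}(1-r)$, and iterating the doubling inequality yields $\omg(r)\le C^k\omg(r_k)$; choosing $k=\lceil\log_2\frac{1-r}{1-t}\rceil$ so that $r_k\ge t$ and using that $\omg$ is decreasing produces $\omg(r)\le C(\frac{1-r}{1-t})^{\beta_0}\omg(t)$ with $\beta_0=\log_2 C$, and general $\beta\ge\beta_0$ follows because $\frac{1-r}{1-t}\ge1$. The converse (ii)$\Rightarrow$(i) is just the specialization $t=\frac{1+r}{2}$.

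Next, (ii)$\Rightarrow$(iii): the lower bound is the elementary fact above, and for the upper bound I would feed the split at $s=1-1/x$ into $\om_x=x\int_0^1 s^{x-1}\omg(s)\,ds$. On $[1-1/x,1]$ one estimates $\omg(s)\le\omg(1-1/x)$ and the remaining integral by $1/x$; on $[0,1-1/x]$ one applies (ii) as $\omg(s)\le C(x(1-s))^\beta\omg(1-1/x)$, which turns the integral into a Beta integral $\int_0^1 s^{x-1}(1-s)^\beta\,ds\asymp x^{-\beta-1}$, so that the powers of $x$ cancel. For the return (iii)$\Rightarrow$(i) I would argue by contradiction via an overshoot principle: if doubling failed at some $r$ (with $R=\frac{1+r}{2}$ and $N=\omg(r)/\omg(R)$ large), then taking $x=1/(1-R)$, so that $1-1/x=R$ and $r^{x}\asymp e^{-2}$, the elementary lower bound $\om_x\ge r^{x}\int_r^R\om(s)\,ds=r^x(\omg(r)-\omg(R))\gtrsim N\,\omg(R)$ would clash with the upper half of (iii), namely $\om_x\lesssim\omg(R)$, forcing $N$ to stay bounded. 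This completes (i)$\Leftrightarrow$(ii)$\Leftrightarrow$(iii).

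Finally the discrete condition (iv). The implication (i)$\Rightarrow$(iv) is immediate, since (iii) gives $\om_n\asymp\omg(1-1/n)$ and $\om_{2n}\asymp\omg(1-1/(2n))$ while doubling at $r=1-1/n$ reads $\omg(1-1/n)\lesssim\omg(1-1/(2n))$. The reverse (iv)$\Rightarrow$(i) is the step I expect to be the main obstacle, because the naive attempts all circle back to the very bound $\om_n\lesssim\omg(1-1/n)$ one is trying to establish. My plan is a quantitative dyadic estimate: write $\om_n=T_n+\Theta(\omg(1-1/n))$ with $T_n=\int_0^{1-1/n}s^n\om(s)\,ds$, and use $(s^n)^{2^J}\le e^{-(2^J-1)}s^n$ on $[0,1-1/n]$ to get $\om_{2^J n}\lesssim e^{-(2^J-1)}T_n+\omg(1-1/n)$; iterating (iv) $J$ times gives $\om_n\le C^J\om_{2^Jn}$, whence $T_n\le C^J e^{-(2^J-1)}T_n+C'\omg(1-1/n)$. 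The crux is that a fixed $J$ depending only on $C$ makes $C^Je^{-(2^J-1)}<\tfrac12$, because the double exponential beats the geometric loss from iterating (iv); this lets $T_n$ be absorbed and yields $\om_n\lesssim\omg(1-1/n)$. Doubling at the reciprocal integers then follows from $\om_n\asymp\omg(1-1/n)$ together with (iv), and a short monotonicity argument comparing $r$ to $1-1/\lfloor(1-r)^{-1}\rfloor$ upgrades it to doubling for all $r$.
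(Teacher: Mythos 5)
Your proof is correct. Note that the paper does not actually prove this lemma: it is quoted as Lemma~A with a pointer to \cite[Lemma~2.1]{PelSum14}, so there is no internal proof to compare against; your argument is essentially the standard one from that reference. The route you take is the expected one: iteration of the doubling inequality along $r_{k+1}=\frac{1+r_k}{2}$ for (i)$\Leftrightarrow$(ii); the identity $\om_x=x\int_0^1 s^{x-1}\omg(s)\,ds$ together with the split at $s=1-\frac1x$ and the Beta-integral asymptotics for (ii)$\Rightarrow$(iii); the test point $x=\frac{2}{1-r}$ for (iii)$\Rightarrow$(i); and, for the only genuinely delicate implication (iv)$\Rightarrow$(i), the absorption argument in which $s^{2^Jn}\le e^{-(2^J-1)}s^n$ on $[0,1-\frac1n]$ lets the double-exponential gain beat the geometric loss $C^J$ from iterating $\om_n\le C\om_{2n}$ --- this is exactly the right mechanism, and the absorption is legitimate since $T_n\le\om_0<\infty$. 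The only points needing (routine) care, which you correctly wave at, are the ranges $x\in[1,2]$, $n=1$ and $r<\frac12$, where all the quantities involved are comparable to constants depending only on $\om$, and the final upgrade from doubling at the points $1-\frac1n$ to doubling at all $r$, which follows from monotonicity of $\omg$ after two applications of the discrete estimate.
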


Write 
$K^\om_{\z}(z)=\frac{1}{z}\int_0^z B^\om_{\z}(u)\,du\, \text{and}\,
G^\om_{\z}(z) =\frac{d}{dz}K^\om_{\z}(z),\, z,\z\in\D. $ It follows from \eqref{eq:B} that 
$G^{\om}_{\z}(z)=
\bar{\z} 
\sum_{k=0}^\infty 
\frac{z^k\bar{\z}^k}{2\om_{2k+3}}\frac{k+1}{k+2}$, so a slight modification in
 the proof of 
\cite[Theorem 1]{PelRatproj} gives the next result.

\begin{letterlemma}\label{le:Gt}
Let $\om\in\DD$ and $0<q<\infty$. Then, 
\begin{equation*}\label{eq:Gtintegralmeans}
M_q^q(r,G^{\om}_\z)\asymp |\z|^q M_q^q (r, B_\z^{\om})\asymp |\z|^q+|\z|^q\int_0^{r|\z|}\frac{dx}{\omg(x)^q(1-x)^q}, \quad 0\le r<1, \quad \z\in\D.
\end{equation*}
\end{letterlemma}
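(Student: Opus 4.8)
The plan is to prove the integral-means estimate for $G^\om_\z$ by reducing it to known reproducing-kernel estimates and controlling the arithmetic factor $\frac{k+1}{k+2}$.

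First I would establish the middle comparison $M_q^q(r,G^{\om}_\z)\asymp |\z|^q M_q^q(r,B^{\om}_\z)$. From the series expansions
\begin{equation*}
G^{\om}_{\z}(z)=\bar{\z}\sum_{k=0}^\infty \frac{z^k\bar{\z}^k}{2\om_{2k+3}}\frac{k+1}{k+2},\qquad B^{\om}_{\z}(z)=\sum_{k=0}^\infty\frac{z^k\bar{\z}^k}{2\om_{2k+1}},
\end{equation*}
the two functions differ in their Taylor coefficients by the shift $\om_{2k+1}\mapsto\om_{2k+3}$ and the bounded factor $\frac{k+1}{k+2}\in[\frac12,1)$. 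The point is that $G^{\om}_\z/\bar\z$ and $B^{\om}_\z$ are both, up to these harmless perturbations, the reproducing-type kernel built from the odd moments; the whole power of $|\z|$ in front comes from the single factor $\bar\z$ pulled out of the $G$-series. I expect this reduction to be essentially the content of the parenthetical remark that the result follows from ``a slight modification in the proof of \cite[Theorem 1]{PelRatproj}'': the moment shift $\om_{2k+3}$ versus $\om_{2k+1}$ is absorbed using that $\om\in\DD$ forces $\om_x\asymp\om_{x+2}$ (a consequence of Lemma~\ref{caract. pesos doblantes}(iv) together with monotonicity of moments), and the factor $\frac{k+1}{k+2}$, being bounded above and below by positive constants, does not affect any $\asymp$-estimate of integral means.

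Second I would invoke \cite[Theorem~1]{PelRatproj} itself, which gives the integral-means estimate for the Bergman kernel $B^{\om}_\z$ when $\om\in\DD$, namely
\begin{equation*}
M_q^q(r,B^{\om}_\z)\asymp 1+\int_0^{r|\z|}\frac{dx}{\omg(x)^q(1-x)^q},\qquad 0\le r<1,\ \z\in\D.
\end{equation*}
Multiplying by $|\z|^q$ then yields the right-hand side as stated. The key technical input here is the characterization of $\DD$ in Lemma~\ref{caract. pesos doblantes}, particularly the moment asymptotic (iii), $\om_x\asymp\omg(1-\tfrac1x)$, which converts the coefficient sum $\sum z^k\bar\z^k/\om_{2k+1}$ into the integral $\int_0^{r|\z|}\omg(x)^{-q}(1-x)^{-q}\,dx$; this is exactly the mechanism already carried out in \cite{PelRatproj}, and I would not reproduce it but cite it.

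The main obstacle, and the only place where genuine care is needed beyond quoting the reference, is verifying that the modification truly is ``slight'': one must check that replacing $\om_{2k+1}$ by $\om_{2k+3}$ and inserting $\frac{k+1}{k+2}$ preserves the estimate with the \emph{same} right-hand side (same $\omg$, same limits $0$ to $r|\z|$), rather than producing a shifted or rescaled integral. The clean way to see this is to note that $\om_{2k+3}\asymp\om_{2k+1}$ uniformly in $k$ for $\om\in\DD$ — so the shifted kernel has coefficients comparable to those of $B^{\om}_\z$ term by term — and then to appeal to the standard principle that two analytic functions with coefficientwise comparable nonnegative-coefficient expansions have comparable $M_q^q(r,\cdot)$. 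Since all coefficients here are nonnegative after the $|\z|^{2k}$ grouping, this comparison is legitimate and transfers the full estimate from $B^{\om}_\z$ to $G^{\om}_\z/\bar\z$, completing the chain of $\asymp$'s.
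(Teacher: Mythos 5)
Your route is the same one the paper takes: the paper supplies no written proof of this lemma beyond recording the expansion $G^{\om}_{\z}(z)=\bar{\z}\sum_{k\ge0}\frac{(\bar\z z)^k}{2\om_{2k+3}}\frac{k+1}{k+2}$ and remarking that a slight modification of the proof of \cite[Theorem~1]{PelRatproj} gives the result, and your first two steps (pull out $\bar\z$, reduce by rotation invariance to nonnegative coefficients, absorb the moment shift via $\om_{2k+1}\asymp\om_{2k+3}$ from Lemma~\ref{caract. pesos doblantes}, quote the known estimate for $M_q^q(r,B^\om_\z)$) are exactly that reduction.

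The gap is in the ``clean way'' you offer for the transfer step. The claimed \emph{standard principle} --- that two power series with termwise comparable nonnegative coefficients have comparable integral means $M_q$ --- is false for $q\ne 2,\infty$. Take $a_n\equiv1$ and $b_n=1+\tfrac12\e_n\in\{\tfrac12,\tfrac32\}$ for a suitable choice of signs $\e_n=\pm1$: both sequences are positive and termwise comparable, yet for almost every choice of signs $M_q\left(r,\sum\e_nz^n\right)\asymp(1-r)^{-1/2}$ for every $q$, so for $1\le q<2$ one gets $M_q\left(r,\sum b_nz^n\right)\gtrsim(1-r)^{-1/2}$ while $M_q\left(r,\tfrac1{1-z}\right)\asymp(1-r)^{-(q-1)/q}\log$-type, which is of strictly smaller order. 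Since the lemma is asserted for all $0<q<\infty$ and is later applied with $q=1$ (in Lemma~\ref{triangulo}), you cannot rest the argument on this principle. What actually makes the comparison legitimate here is not termwise comparability alone but the \emph{block regularity} of both coefficient sequences: by Lemma~\ref{caract. pesos doblantes}(iii)--(iv), both $\frac{1}{\om_{2k+1}}$ and $\frac{k+1}{(k+2)\om_{2k+3}}$ are comparable to $\frac{1}{\omg(1-2^{-n})}$ uniformly for $k\in I(n)$, and for positive sequences that are essentially constant on dyadic blocks one has $M_q^q(r,f)\asymp\sum_n 2^{n(q-1)}a_{2^n}^qr^{q2^{n+1}}$ (this is the dyadic decomposition, via \eqref{eq:Dn} and Lemma~\ref{lema de los lambda}, on which the proof of \cite[Theorem~1]{PelRatproj} runs). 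That expression depends on the coefficients only through the block values, so the estimate does transfer --- but this regularity must be invoked explicitly, or else one simply reruns the cited proof with $\om_{2k+1}$ replaced by $\frac{k+2}{k+1}\om_{2k+3}$, which is the ``slight modification'' the paper intends. With that repair your proposal is complete.
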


\begin{lemma}
\label{Estimaciones integral real nucleos}
Let  $\om \in \DD$. Then, 
\begin{equation}\label{eq:radialkernel}
 K^\om_s(t)
 \asymp 1+\int_0^{st} \frac{1}{\omg(x)( 1-x)} dx, \quad 0\le t, s<1.
 \end{equation}
\end{lemma}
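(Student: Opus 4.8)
The plan is to reduce \eqref{eq:radialkernel} to a comparison between a power series and an integral. First I would evaluate $K^\om_s(t)$ on the positive real axis: from \eqref{eq:B}, for $0\le s,t<1$,
\[
K^\om_s(t)=\frac1t\int_0^t B^\om_s(u)\,du=\sum_{n=0}^\infty\frac{(st)^n}{2\om_{2n+1}(n+1)},
\]
so, writing $r=st\in[0,1)$, the claim is equivalent to
\[
S(r):=\sum_{n=0}^\infty\frac{r^n}{2\om_{2n+1}(n+1)}\asymp 1+\int_0^r\frac{dx}{\omg(x)(1-x)},\qquad 0\le r<1.
\]
By Lemma~\ref{caract. pesos doblantes}(iii) the odd moments satisfy $\om_{2n+1}\asymp\omg(1-\tfrac{1}{2n+1})\asymp\omg(1-\tfrac{1}{n+1})$, the last step using that the two points lie at comparable distance from the boundary and that $\omg$ is doubling. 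Hence $S(r)\asymp\sum_{n\ge0}\frac{r^n}{(n+1)\psi(n)}$, where $\psi(n):=\omg(1-\tfrac{1}{n+1})$.

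Next I would recast the integral as the continuous analogue of this sum. The substitution $u=(1-x)^{-1}$ yields
\[
\int_0^r\frac{dx}{\omg(x)(1-x)}=\int_1^{1/(1-r)}\frac{du}{u\,\omg(1-1/u)},
\]
and since for $u\in[n+1,n+2]$ one has $u\asymp n+1$ and $\omg(1-1/u)\asymp\psi(n)$ by doubling, each unit block of the integral is comparable to $\frac{1}{(n+1)\psi(n)}$. Thus $\int_0^r\frac{dx}{\omg(x)(1-x)}\asymp\sum_{n=0}^{N}\frac{1}{(n+1)\psi(n)}$ with $N\asymp(1-r)^{-1}$, the additive $1$ being accounted for by the $n=0$ term since $\psi(0)=\omg(0)\asymp1$.

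For the lower bound I would keep only the head $0\le n\le N$ and use $r^n\ge r^N\ge c>0$ there (because $r^{1/(1-r)}\to e^{-1}$), together with $S(r)\ge\frac{1}{2\om_1}\gtrsim1$; this gives $S(r)\gtrsim 1+\int_0^r\frac{dx}{\omg(x)(1-x)}$. The main obstacle is the upper bound, where I must show the tail $\sum_{n>N}\frac{r^n}{(n+1)\psi(n)}$ is controlled. Here I would invoke Lemma~\ref{caract. pesos doblantes}(ii) to obtain, for $n>N$, the polynomial estimate $\frac{1}{\psi(n)}\lesssim\left(\frac{n+1}{N+1}\right)^{\beta}\frac{1}{\psi(N)}$ with $\beta\ge\beta_0$, and combine it with the elementary bound $r^n\le e^{-n(1-r)}$. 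The resulting sum $\sum_{n>N}r^n(n+1)^{\beta-1}\lesssim(1-r)^{-\beta}\asymp(N+1)^{\beta}$ collapses the tail to $O(1/\psi(N))$.

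Finally I would absorb this leftover term into the right-hand side. Since $\psi(N)\asymp\omg(r)$ (as $1-\tfrac{1}{N+1}\asymp r$ in the doubling sense) and, by doubling, $\int_{1-2(1-r)}^{r}\frac{dx}{\omg(x)(1-x)}\asymp\frac{1}{\omg(r)}$, one gets $\frac{1}{\psi(N)}\lesssim 1+\int_0^r\frac{dx}{\omg(x)(1-x)}$, which together with the head estimate completes the upper bound and hence the proof. Throughout, the two places where doubling is genuinely used are the block comparison turning the integral into the sum and the tail control via Lemma~\ref{caract. pesos doblantes}(ii); the interplay of the geometric factor $r^n$ with the cutoff $N\asymp(1-r)^{-1}$ is the one point requiring care.
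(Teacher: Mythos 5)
Your proof is correct and follows essentially the same route as the paper's: the same cut at $N\asymp(1-st)^{-1}$, the same block-by-block comparison of the head of the series with the integral via Lemma~\ref{caract. pesos doblantes}(iii) and doubling, the same tail control via the polynomial estimate of Lemma~\ref{caract. pesos doblantes}(ii) combined with $\sum_{n>N}r^n(n+1)^{\beta-1}\lesssim(1-r)^{-\beta}$, and the same absorption of the leftover $1/\omg(st)$ into $\int_{2st-1}^{st}\frac{dx}{\omg(x)(1-x)}$. The only cosmetic difference is that you first sum the series in the single variable $r=st$, whereas the paper keeps the average $\frac{1}{t}\int_0^t$ explicit.
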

\begin{proof} 
By using \eqref{eq:B}, we get
$  K^\om_s(t)=
\frac{1}{2\om_1} +\frac{1}{t} \int_0^t \sum\limits_{n=1}^{\infty} \frac{s^n x^n}{2\om_{2n+1}}\,dx,$
so it is enough to estimate $\int_0^t \sum\limits_{n=1}^{\infty} \frac{s^n x^n}{2\om_{2n+1}}dx$.

Assume that $st\in [1/2,1)$ and choose $N^*\in\N$, $N^*\geq 2$, such that $1-\frac{1}{N^*}\leq st< 1-\frac{1}{N^{*}+1}$. 
 Lemma \ref{caract. pesos doblantes} yields
\begin{equation*}\begin{split}
 & \int_0^t \sum\limits_{n=1}^{\infty} \frac{s^n x^n}{2\om_{2n+1}}dx  \geq \int_0^t \sum\limits_{n=1}^{N^*} \frac{s^n x^n}{2\om_{2n+1}}dx
  \asymp t  \sum\limits_{n=1}^{N^*} \frac{s^n t^n}{(n+1)\omg\left( 1-\frac{1}{n+1}\right)}
\\ &\gtrsim t  \sum\limits_{n=1}^{N^*}\int_n^{n+1} \frac{1}{x\omg\left( 1-\frac{1}{x}\right)} dx
   =t\int_0^{1-\frac{1}{N^*+1}}\frac{1}{\omg(r)(1-r)}dr
   \geq t\int_0^{st}\frac{1}{\omg(r)(1-r)}\,dr.
  \end{split}\end{equation*}
  
 If $st\in[0,\frac{1}{2})$
  \begin{align*}
  \int_0^t \sum\limits_{n=1}^{\infty} \frac{s^n x^n}{2\om_{2n+1}}dx\geq \int_0^t \frac{s x}{2\om_{3}}dx \asymp t\,(st)
  \gtrsim t \int_0^{st} \frac{1}{\omg(x)(1-x)}dx,
  \end{align*}
so
  \begin{equation*}
 \int_0^t B_s^{\om} (x) dx \gtrsim t+t\int_0^{st} \frac{1}{\omg(x)(1-x)} dx, \quad 0\le s,t<1.
   \end{equation*}

  On the other hand, to obtain the upper bound in \eqref{eq:radialkernel}, we will estimate both terms in the   sum 
  \begin{align*}
    \int_0^t \sum\limits_{n=1}^{N^*} \frac{s^n x^n}{2\om_{2n+1}}dx + \int_0^t \sum\limits_{n=N^*+1}^{\infty} \frac{s^n x^n}{2\om_{2n+1}}dx .
  \end{align*}
 Lemma \ref{caract. pesos doblantes} also gives
\begin{align*}
  \int_0^t \sum\limits_{n=1}^{N^*} \frac{s^n x^n}{2\om_{2n+1}}\,dx &\asymp \sum\limits_{n=1}^{N^*} \frac{s^n t^{n+1}}{(n+1)\omg\left(1-\frac{1}{n}\right)}
  \\&
  \lesssim t\sum\limits_{n=1}^{N^*} \int_n^{n+1}\frac{1}{x\omg\left(1-\frac{1}{x}\right)} \, dx
  \\   & =  t\int_0^{1-\frac{1}{N^*+1}}\frac{1}{\omg(x)(1-x)}dx.
\end{align*}

If $0\le st\leq \frac{1}{2}$, ($N^*=1$), then

\begin{equation}\label{eq:j2}
\int_0^t \sum\limits_{n=1}^{N^*} \frac{s^n x^n}{2\om_{2n+1}}\,dx\lesssim t\int_0^{\frac{1}{2}}\frac{1}{\omg(x)(1-x)}dx\lesssim \frac{t}{\omg\left(\frac{1}{2}\right)}\lesssim t.
\end{equation}

If $\frac{1}{2}\le st <1$, by using Lemma~\ref{caract. pesos doblantes}
 \begin{align*}
& t\int_0^{1-\frac{1}{N^*+1}}\frac{1}{\omg(x)(1-x)}dx
\\ &\leq  
   t\int_0^{st}\frac{1}{\omg(x)(1-x)}dx+
 t\int_{1-\frac{1}{N^*}}^{1-\frac{1}{N^*+1}}\frac{1}{(1-x)\omg\left(x\right)} dx 
  \\ &  \le   t\int_0^{st}\frac{1}{\omg(x)(1-x)}dx+\frac{t\log 2}{\omg\left(1-\frac{1}{N^*+1}\right)} 
\\ &
\lesssim t\int_0^{st}\frac{1}{\omg(x)(1-x)}dx+ \frac{t\log 2}{\omg(2st-1)}
\\ & \lesssim t\int_0^{st}\frac{1}{\omg(x)(1-x)}dx+t\int_{2st-1}^{st} \frac{1}{\omg(x)(1-x)}dx \lesssim t\int_0^{st}\frac{1}{\omg(x)(1-x)}dx,
\end{align*}
which together with \eqref{eq:j2} gives
 
\begin{equation}\label{eq:j3}
 \int_0^t \sum\limits_{n=1}^{N^*} \frac{s^n x^n}{2\om_{2n+1}}dx \lesssim t+ t\int_0^{st}\frac{1}{\omg(x)(1-x)}dx, \quad 0\le s,t <1.
 \end{equation}

Moreover,   by Lemma \ref{caract. pesos doblantes}(ii) there is $\b>1$ such that $h(x)=\frac{\omg(x)}{(1-x)^\b}$ is essentially increasing, so
if  $\frac{1}{2}\le st<1$
\begin{align*}
&    \int_0^t \sum\limits_{n=N^*+1}^{\infty} \frac{s^n x^n}{2\om_{2n+1}}dx \asymp \sum\limits_{n=N^*+1}^{\infty} \frac{s^n t^{n+1}}{(n+1)\omg\left(1-\frac{1}{n}\right)}
   \lesssim t \frac{(1-st)^{\b}}{\omg(st)}\sum\limits_{n=N^*+1}^{\infty} (st)^{n}(n+1)^{\b -1} \\
    &\lesssim t \frac{(1-st)^{\b}}{\omg(st)}(N^* +1)^{\b} 
     \lesssim \frac{t}{\omg(st)}
   \asymp t \int\limits_{2st-1}^{st}\frac{1}{\omg(x)(1-x)}dx\leq  t \int\limits_{0}^{st}\frac{1}{\omg(x)(1-x)} dx,
    \end{align*}
and  if 
 $st\leq \frac{1}{2}$
$$  \int_0^t \sum\limits_{n=N^*+1}^{\infty} \frac{s^n x^n}{2\om_{2n+1}}dx\lesssim\frac{t}{\omg(st)} \leq \frac{t}{\omg(1/2)}. $$
In consequence, 
$$  \int_0^t \sum\limits_{n=N^*+1}^{\infty} \frac{s^n x^n}{2\om_{2n+1}}dx \lesssim t+t \int\limits_{0}^{st}\frac{1}{\omg(x)(1-x)} dx,\quad 0\le s,t<1,$$
which together with \eqref{eq:j3} gives 

\begin{equation*}
   \frac{1}{t}\int_0^t B_s^{\om} (x) dx \lesssim 1+\int_0^{st} \frac{1}{\omg(x)(1-x)} dx, \quad 0\le s, t<1.
   \end{equation*}
This finishes the proof. 
\end{proof}

\begin{lemma}
\label{triangulo}
Let $\om \in \DD$. Then,

$$M_1(\rho, K^\om_t)\asymp 
 1+\int_0^{\r t}\frac{ds}{\omg(s)},\quad 0\le t,\r<1.
$$
\end{lemma}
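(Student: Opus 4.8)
The plan is to establish the two one-sided estimates separately from the Taylor expansion of the kernel. By \eqref{eq:B},
\[
K^\om_t(z)=\sum_{n=0}^\infty c_n (tz)^n,\qquad c_n=\frac{1}{2(n+1)\om_{2n+1}}\ge 0,
\]
so the boundary function $\theta\mapsto K^\om_t(\rho e^{i\theta})$ is an $H^1$ function with nonnegative Fourier coefficients $c_n(t\rho)^n$, and $K^\om_t(0)=c_0=\frac{1}{2\om_1}\asymp 1$. In particular $M_1(\rho,K^\om_t)\ge |K^\om_t(0)|\gtrsim 1$, so everything reduces to matching the term $\int_0^{\rho t}\frac{ds}{\omg(s)}$.

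For the upper bound I would integrate the derivative $G^\om_t=(K^\om_t)'$ along radii,
\[
K^\om_t(\rho e^{i\theta})=K^\om_t(0)+e^{i\theta}\int_0^\rho G^\om_t(se^{i\theta})\,ds,
\]
whence $M_1(\rho,K^\om_t)\le |K^\om_t(0)|+\int_0^\rho M_1(s,G^\om_t)\,ds$. Lemma~\ref{le:Gt} with $q=1$ gives $M_1(s,G^\om_t)\asymp t+t\int_0^{st}\frac{dx}{\omg(x)(1-x)}$, and Fubini's theorem turns the radial integral into $t\rho+\int_0^{\rho t}\frac{\rho t-x}{\omg(x)(1-x)}\,dx$. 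Since $\rho t-x\le 1-x$ on $[0,\rho t]$ and $t\rho\lesssim 1$, this is $\lesssim 1+\int_0^{\rho t}\frac{dx}{\omg(x)}$, the desired upper bound.

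The lower bound is the heart of the matter, since the triangle inequality only produces upper estimates. To see the mean from below I would apply Hardy's inequality $\sum_{n\ge 0}\frac{|\widehat h(n)|}{n+1}\lesssim\|h\|_{H^1}$ \cite{Duren} to $h(e^{i\theta})=K^\om_t(\rho e^{i\theta})$; since its coefficients are nonnegative and $\frac{r^n}{n+1}=\int_0^1(rv)^n\,dv$,
\[
M_1(\rho,K^\om_t)\gtrsim\sum_{n=0}^\infty\frac{c_n(t\rho)^n}{n+1}=\int_0^1 K^\om_{t\rho}(v)\,dv.
\]
Feeding the kernel estimate of Lemma~\ref{Estimaciones integral real nucleos} into this and using Fubini gives $\int_0^1 K^\om_{t\rho}(v)\,dv\asymp 1+\frac{1}{t\rho}\int_0^{t\rho}\frac{t\rho-x}{\omg(x)(1-x)}\,dx$.

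It thus remains to prove the genuinely weight-dependent inequality
\[
\int_0^{r}\frac{r-x}{\omg(x)(1-x)}\,dx\gtrsim \int_0^{r}\frac{dx}{\omg(x)},\qquad r=t\rho,
\]
which I expect to be the main obstacle. For $r\le \tfrac12$ the right-hand side is $\lesssim 1$ and is already absorbed by $M_1(\rho,K^\om_t)\gtrsim 1$, so only $r\ge\tfrac12$ matters; there $\tfrac1{t\rho}\asymp 1$. As $\frac{r-x}{1-x}$ is decreasing and equals $\tfrac12$ at $x=2r-1$, the left side dominates $\tfrac12\int_0^{2r-1}\frac{dx}{\omg(x)}$, so it suffices to check that $\int_0^{2r-1}\frac{dx}{\omg(x)}\asymp\int_0^{r}\frac{dx}{\omg(x)}$ for $\om\in\DD$. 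This is where the doubling condition enters: the discarded piece satisfies $\int_{2r-1}^{r}\frac{dx}{\omg(x)}\le\frac{1-r}{\omg(r)}$, while, estimating $\omg$ on the interval $[3r-2,2r-1]$ of length $1-r$ via Lemma~\ref{caract. pesos doblantes}, which yields $\omg(3r-2)\asymp\omg(2r-1)\asymp\omg(r)$, the retained piece is $\gtrsim\frac{1-r}{\omg(r)}$. Combining this with the upper bound completes the proof.
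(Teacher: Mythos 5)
Your argument is correct and follows essentially the same route as the paper: the lower bound via Hardy's inequality applied to the nonnegative Taylor coefficients of $(K^\om_t)_\rho$, and the upper bound by integrating $M_1(s,G^\om_t)$ radially and invoking Lemma~\ref{le:Gt} with $q=1$. The only difference is cosmetic: where the paper re-runs the Riemann-sum argument of Lemma~\ref{Estimaciones integral real nucleos} on the series $\sum\frac{(t\rho)^n}{\om_{2n+1}(n+1)^2}$, you resum it as $\int_0^1 K^\om_{t\rho}(v)\,dv$ and reuse Lemma~\ref{Estimaciones integral real nucleos} as a black box, at the cost of the extra doubling computation for $\int_0^r\frac{r-x}{\omg(x)(1-x)}\,dx$ (which is fine; just note that when $t\rho$ is merely moderately large the point $3r-2$ may be negative, so one should restrict that step to, say, $t\rho\ge 3/4$ and absorb the remaining range into the constant, exactly as you already do for $t\rho\le 1/2$).
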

\begin{proof}
Using that $K^\om_t(z)=\sum_{n=0}^{\infty} \frac{t^nz^n}{2(n+1)\om_{2n+1}}$, 
 the Hardy's inequality \cite[Section~3.6]{Duren} yields
$
M_1(\rho, K^\om_t)
 \gtrsim \sum\limits_{n=0}^{\infty} \frac{t^n\r^n}{2\om_{2n+1}(n+1)^2}.
$
So arguing as in the proof of Lemma~\ref{Estimaciones integral real nucleos}, we deduce
$$M_1(\rho, K^\om_t)\gtrsim
 1+\int_0^{\r t}\frac{ds}{\omg(s)},\quad 0\le t,\r<1.
$$

Conversely, bearing in mind \eqref{eq:HpDpp<2} and 
Lemma~\ref{le:Gt}
\begin{equation*}\begin{split}
& M_1(\rho, K^\om_t)  
 \lesssim \| (K^\om_t)_\rho\|_{D^1_0}
 \asymp 1 + \int_0^1 s\rho M_1( s\rho, G^\om_t) ds
 \\ &\asymp  1 + \int_0^1 s\rho t \left(1+ \int_0^{s\r t}\frac{dx}{(1-x)\widehat{\om}(x)} \right)\,ds
 \lesssim  1+\int_0^{\r t}\frac{ds}{\omg(s)}, \quad 0\le t,\rho<1.
\end{split}\end{equation*}
This finishes the proof.
\end{proof}

\section{Hilbert-type operators from Lebesgue to Bergman spaces}\label{sec:Bergman}

 For $p,q, \alpha >0$, let
$H^1(p,q, \alpha)$ denote the space of $f \in \H(\D)$ such that
$$ \nm{f}_{H^1(p,q, \alpha)}=\left(\abs{f(0)}^p +\int_0^1 M_q^p(r,f')(1-r)^{\a}dr\right)^{\frac{1}{p}}<\infty.$$

We begin with obtaining a Littlewood-Paley-type inequality for the $A^p_\nu$ norm which is inherited
from the following one \cite[Corollary~3.1]{MatPav} through integration.

\begin{letterlemma}\label{le:Hpmixto}
Let $1<q<p<\infty$. Then, 
$$\|f\|_{H^p}\lesssim  \nm{f}_{H^1\left (p,q, p\left(1-\frac{1}{q}\right)\right)}, \quad f\in \H(\D).$$

\end{letterlemma}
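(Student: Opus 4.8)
The plan is to read the estimate as the nontrivial (embedding) half of the Mateljević--Pavlović mixed-norm description of $H^p$ and to derive it from a Littlewood--Paley inequality sharpened by a Hardy--Littlewood local growth estimate. The orienting remark is that for $q=p$ the assertion is exactly the classical Littlewood--Paley inequality $\|f\|_{H^p}^p\lesssim|f(0)|^p+\int_{\D}|f'(z)|^p(1-|z|)^{p-1}\,dA(z)$, valid for $p\ge2$, because the weight $(1-r)^{p(1-1/q)}$ collapses to $(1-r)^{p-1}$ when $q=p$; permitting $q<p$ is precisely the flexibility needed to keep the statement correct for all $1<p<\infty$. The first step I would take is to trade, pointwise in $r$, the inner $L^q(\T)$ norm of $f'$ for its $L^p(\T)$ norm: from the Hardy--Littlewood bound $M_\infty(r,g)\lesssim(1-r)^{-1/q}M_q\!\left(\tfrac{1+r}{2},g\right)$ together with the interpolation $M_p^p(r,g)\le M_\infty^{p-q}(r,g)\,M_q^q(r,g)$ one gets $M_p(r,f')\lesssim(1-r)^{-(1/q-1/p)}M_q\!\left(\tfrac{1+r}{2},f'\right)$.

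For $p\ge2$ this already finishes the proof. Inserting the last bound into the radial form of the Littlewood--Paley inequality gives $\|f\|_{H^p}^p\lesssim|f(0)|^p+\int_0^1(1-r)^{-p(1/q-1/p)}M_q^p\!\left(\tfrac{1+r}{2},f'\right)(1-r)^{p-1}\,dr$, and the decisive point is that the two powers of $1-r$ combine to the target weight, $-p\left(\tfrac1q-\tfrac1p\right)+(p-1)=p\left(1-\tfrac1q\right)$. The change of variable $u=\tfrac{1+r}{2}$, under which $1-r\asymp 1-u$, then dominates the right-hand side by $\int_{1/2}^1 M_q^p(u,f')(1-u)^{p(1-1/q)}\,du\le\int_0^1 M_q^p(u,f')(1-u)^{p(1-1/q)}\,du$, which together with the $|f(0)|^p$ term is exactly $\nm{f}_{H^1\left(p,q,p\left(1-\frac1q\right)\right)}^p$; this exponent bookkeeping is the only finely tuned step in this range.

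The hard part will be the range $1<p<2$, where the radial Littlewood--Paley inequality runs the wrong way, so $\|f\|_{H^p}$ can no longer be majorised by $\int_{\D}|f'|^p(1-|z|)^{p-1}\,dA$. Here I would route the argument through the Lusin area integral $S(f)$, invoking $\|f\|_{H^p}\lesssim|f(0)|+\|S(f)\|_{L^p(\T)}$, which holds for every $0<p<\infty$, and then bound $\|S(f)\|_{L^p(\T)}$ by the mixed-norm quantity on the right via a local $L^q\to L^2$ estimate of the same Hardy--Littlewood flavour. This last bound is the technical heart; it is exactly where $q<p$ is indispensable, the estimate degenerating as $q\uparrow p$, and carrying it through rigorously for all $1<p<\infty$ is the content of \cite[Corollary~3.1]{MatPav}, which I would cite rather than reprove. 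Once the lemma is in hand, the weighted inequality \eqref{eq:i2} follows by applying it to the dilations $f_\rho$ and integrating against $\nug$, as announced before the statement.
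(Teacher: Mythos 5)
The paper offers no proof of this statement at all: it is quoted directly as \cite[Corollary~3.1]{MatPav} (hence its appearance as a lettered lemma), and your proposal ultimately rests on exactly that same citation, so the two ``proofs'' coincide where it matters. What you add is a correct, self-contained derivation in the range $p\ge 2$: the interpolation $M_p^p(r,g)\le M_\infty^{p-q}(r,g)M_q^q(r,g)$ combined with the Hardy--Littlewood bound $M_\infty(r,f')\lesssim(1-r)^{-1/q}M_q\left(\tfrac{1+r}{2},f'\right)$ and the radial Littlewood--Paley inequality, with the exponent arithmetic $-p\left(\tfrac1q-\tfrac1p\right)+(p-1)=p\left(1-\tfrac1q\right)$ checking out and the dilation $u=\tfrac{1+r}{2}$ handled correctly. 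For $1<p<2$ you rightly observe that this route fails and that the area-integral estimate you would need is precisely the content of the cited corollary; just be aware that, as written, your text establishes nothing in that range beyond the citation itself --- which, to be fair, is also all the paper does.
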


\begin{lemma}
\label{sustituto lit.Pal}
Let $1<q<p<\infty$ and $\nu$ be a radial weight. Then, 
$$ \nm{f}_{A^p_{\nu}}^p\lesssim \abs{f(0)}^p\nug(0)+\int_0^1M_q^p(t,f')(1-t)^{p\left(1-\frac{1}{q}\right)}\nug(t)dt,  \quad f\in \H(\D). $$ 
\end{lemma}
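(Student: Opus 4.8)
The plan is to deduce the weighted inequality by integrating the mixed-norm estimate of Lemma~\ref{le:Hpmixto} in the radial variable against $\nu$, combined with a dilation argument and Fubini's theorem. Writing the $A^p_\nu$-norm in polar coordinates and using that $\nu$ is radial, I would first record the identity
$$\|f\|_{A^p_\nu}^p=2\int_0^1 M_p^p(r,f)\,\nu(r)\,r\,dr,$$
which reduces the matter to controlling $M_p^p(r,f)$ for each fixed $r\in(0,1)$.

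For fixed $r$, I would apply Lemma~\ref{le:Hpmixto} to the dilated function $f_r(z)=f(rz)$. Since $M_p(\cdot,f)$ is nondecreasing, $\|f_r\|_{H^p}=M_p(r,f)$, while $f_r(0)=f(0)$ and $M_q(s,(f_r)')=r\,M_q(rs,f')$. Hence Lemma~\ref{le:Hpmixto} gives
$$M_p^p(r,f)\lesssim |f(0)|^p+\int_0^1 r^p M_q^p(rs,f')(1-s)^{p\left(1-\frac{1}{q}\right)}\,ds.$$
The change of variables $t=rs$ turns the integral into $r^{\frac{p}{q}-1}\int_0^r M_q^p(t,f')(r-t)^{p\left(1-\frac{1}{q}\right)}\,dt$. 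Here I would invoke the hypothesis $1<q<p$: it guarantees $\frac{p}{q}-1>0$, so $r^{\frac{p}{q}-1}\le 1$, and since $r-t\le 1-t$ one obtains the clean bound
$$M_p^p(r,f)\lesssim |f(0)|^p+\int_0^r M_q^p(t,f')(1-t)^{p\left(1-\frac{1}{q}\right)}\,dt.$$

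Finally, I would multiply by $2r\nu(r)$ and integrate over $r\in(0,1)$. The $|f(0)|^p$ term contributes at most $|f(0)|^p\widehat{\nu}(0)$, since $\int_0^1\nu(r)\,r\,dr\le\widehat{\nu}(0)$; for the remaining double integral, Fubini's theorem followed by the estimate $\int_t^1\nu(r)\,r\,dr\le\widehat{\nu}(t)$ yields the asserted inequality. The only delicate point is the bookkeeping of the dilation factors under the change of variables, and the hypothesis $q<p$ is precisely what renders the extra power of $r$ harmless; once that is observed there is no real obstacle, the result being essentially a weighted integration of Lemma~\ref{le:Hpmixto}.
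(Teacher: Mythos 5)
Your proposal is correct and follows essentially the same route as the paper: apply Lemma~\ref{le:Hpmixto} to the dilations of $f$, change variables in the radial integral, and conclude by Fubini's theorem together with $\int_t^1 \nu(s)\,ds\le\widehat{\nu}(t)$. The only cosmetic difference is that you absorb the dilation powers pointwise in $r$ (via $r^{\frac{p}{q}-1}\le 1$) before integrating against $\nu$, whereas the paper carries the factor $(s-t)^{p\left(1-\frac{1}{q}\right)}$ through the Fubini step; both bookkeepings are valid.
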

\begin{proof}
Applying Lemma~\ref{le:Hpmixto} to the dilated functions $f_s(z)=f(sz)$ 
\begin{align*}
   \nm{f}_{A^p_{\nu}}^p&=\int_0^1 M_p^p(s, f) s \nu(s) ds =\int_0^1 \nm{f_s}_{H^p}^p s\nu(s)ds\\
   &\lesssim \abs{f(0)}^p\nug(0)+\int_0^1s\nu(s)\int_0^1(1-r)^{p\left(1-\frac{1}{q}\right)}M_q^p(r, (f_s)')dr \,ds\\
   &=\abs{f(0)}^p\nug(0)+\int_0^1s^{p+1}\nu(s)\left(\int_0^1(1-r)^{p\left(1-\frac{1}{q}\right)}M_q^p(rs, f')dr \right)ds
   \\& = \abs{f(0)}^p\nug(0)+\int_0^1s^p\nu(s)\left(\int_0^s\left(1-\frac{t}{s}\right)^{p\left(1-\frac{1}{q}\right)}M_q^p(t, f')dt \right)ds
   \\&\le\abs{f(0)}^p\nug(0)+\int_0^1\nu(s)\left(\int_0^s (s-t)^{p\left(1-\frac{1}{q}\right)}M_q^p(t, f')dt\right)ds
   \\&=\abs{f(0)}^p\nug(0)+\int_0^1 M_q^p(t, f')\left(\int_t^1 \nu(s)(s-t)^{p\left(1-\frac{1}{q}\right)}ds\right)dt\\&
   \le \abs{f(0)}^p\nug(0)+\int_0^1M_q^p(t,f')(1-t)^{p\left(1-\frac{1}{q}\right)}\nug(t)dt.
\end{align*}
\end{proof}

Now we are ready to prove the main result of this section.
\medskip

\begin{Prf}{\em{Theorem~\ref{acotacion en espacios de Bergman}.}}
Assume that \eqref{condicion 1} holds. Then,
\begin{equation}\label{condicion bien definido}
    \int_0^1 \left(\frac{\om(t)}{\nug(t)^{\frac{1}{p}}}\right)^{p'}\,dt<\infty,   
    \end{equation}
and therefore
\begin{align*}
    \int_0^1 \vert f(t)\vert \left|\frac{1}{z}\int_0^z B_t^{\om}(\z) d\z\right|\om (t) dt&\leq \left(\int_0^1 \vert f(t)\vert^p\nug(t)dt\right)^{\frac{1}{p}}\left(\int_0^1 \left|\frac{1}{z}\int_0^z B_t^{\om}(\z) d\z\right|^{p'}\nug(t)^{-\frac{p'}{p}} \om(t)^{p'}dt\right)^{\frac{1}{p'}}\\
    &\leq \Vert f \Vert_{L^p_{\nug,[0,1)}} B_{\abs{z}^{\frac{1}{2}}}^{\om}(\abs{z}^{\frac{1}{2}}) \left(\int_0^1 \left(\frac{\om(t)}{\nug(t)^{\frac{1}{p}}}\right)^{p'}dt\right)^{\frac{1}{p'}}<\infty,
\end{align*}
so $H_{\omega}(f)(z)$ is well defined for any $z\in\D$.
Next, we will show that $H_{\omega}$ is bounded from $L^p_{\nug,[0,1)}$ to $A^p_{\nu}$.  
Take $q\in (1,p)$, then
by Lemma~\ref{sustituto lit.Pal} 
\begin{align}\label{eq:bergman1}
    \nm{H_{\omega}(f)}_{A^p_{\nu}}^p
    \lesssim \abs{H_{\omega}(f)(0)}^p\nug(0)+\int_0^1M_q^p(t,H_{\omega}(f)')(1-t)^{p\left(1-\frac{1}{q}\right)}\nug(t)\,dt.
\end{align}
By using \eqref{condicion bien definido}
\begin{equation}\begin{split}\label{h1}
   \abs{H_{\omega}(f)(0)}^p\nug(0)&\asymp \nug(0)\left(\int_0^1 \abs{f(t)}\om (t)dt\right)^p
   \\ & \leq \nug(0)\left(\int_0^1\abs{f(t)}^p \nug(t)dt\right)\left(\int_0^1 \left(\frac{\om(t)}{\nug(t)^{\frac{1}{p}}}\right)^{p'}dt\right)^{\frac{p}{p'}}\\
   &\lesssim \nm{f}_{L^p_{\nug,[0,1)}}^p.
\end{split}\end{equation}
Now, Minkowski's inequality in continuous form yields 
\begin{align*}
    M_q(r, H_\omega(f)')&=\left(\frac{1}{2\pi}\int_0^{2\pi}\left|\int_0^1 f(t) G^{\om}(re^{i\theta},t)\om(t) dt\right|^q \,d\theta\right)^{\frac{1}{q}}\\ 
    &\leq \int_0^1 \vert f(t)\vert \om(t)\left(\frac{1}{2\pi}\int_0^{2\pi}\left|G^{\om}(re^{i\theta},t)\right|^q d\theta\right)^{\frac{1}{q}} dt= \int_0^1 \vert f(t)\vert \om(t) M_q (r, G_t^{\om} ) dt.
\end{align*}
Moreover, using that $q\in (1,\infty)$ and  Lemma~\ref{le:Gt} 
$$M_q^q(r,G^{\om}_t)\lesssim  M_q^q (r, B_t^{\om})\asymp 1+\int_0^{rt}\frac{dx}{\omg(x)^q(1-x)^q}\asymp \frac{1}{\omg(rt)^q}\frac{1}{(1-rt)^{q-1}},\quad 0\le t<1, $$
So, joining both inequalities
\begin{align*}
\int_0^1M_q^p(r,H_{\omega}(f)')(1-r)^{p\left(1-\frac{1}{q}\right)}\nug(r)dr
  \lesssim &\int_0^1\left[\int_0^1 \frac{\vert f(t)\vert}{(1-rt)^{1-\frac{1}{q}}}  \frac{\om(t)}{\omg(rt)} dt\right]^p(1-r)^{p\left(1-\frac{1}{q}\right)}\nug(r)dr \\
  \asymp & I+ II,
\end{align*}
where
$$I=\int_0^1\left[\int_0^r \frac{\vert f(t)\vert}{(1-rt)^{1-\frac{1}{q}}}  \frac{\om(t)}{\omg(rt)} dt\right]^p(1-r)^{p\left(1-\frac{1}{q}\right)}\nug(r)dr$$ and $$II=\int_0^1\left[\int_r^1 \frac{\vert f(t)\vert}{(1-rt)^{1-\frac{1}{q}}}  \frac{\om(t)}{\omg(rt)} dt\right]^p(1-r)^{p\left(1-\frac{1}{q}\right)}\nug(r)dr.$$

If $t>r$,  $\displaystyle{ \frac{1}{(1-rt)^{1-\frac{1}{q}}}  \frac{1}{\omg(rt)}\asymp\frac{1}{(1-r)^{1-\frac{1}{q}}}  \frac{1}{\omg(r)}}$,
by  Lemma \ref{caract. pesos doblantes}. Therefore, we need to prove the weighted Hardy's inequality 
\begin{align}\label{eq:bergman2}
    II\asymp\int_0^1\left[\int_r^1 \vert f(t)\vert  \om(t) dt\right]^p\frac{\nug(r)}{\omg(r)^p}dr\lesssim \int_0^1 \vert f(t)\vert^p \nug(t)\,dt,
\end{align}
which holds if and only if
(see \cite[Theorem 2]{Muckenhoupt1972} or \cite[Theorem~3.2]{GoLa})
\begin{equation*}
 \sup\limits_{0<r<1}\left(\int_0^r \frac{\nug(t)}{\omg(t)^p} dt\right)^{\frac{1}{p}}\left(\int_r^1 \left(\frac{\om(t)}{\nug(t)^{\frac{1}{p}}}\right)^{p'}\,dt\right)^{\frac{1}{p'}}<\infty.   
\end{equation*}
That is, \eqref{eq:bergman2} follows from (ii).

Now, we will deal with $I$.
If $t<r$, by using Lemma \ref{caract. pesos doblantes}, $\displaystyle{ \frac{1}{(1-rt)^{1-\frac{1}{q}}}  \frac{1}{\omg(rt)}\asymp\frac{1}{(1-t)^{1-\frac{1}{q}}}  \frac{1}{\omg(t)}}$. So, the next step is to show  the weighted Hardy's inequality
\begin{align*}
    I\asymp\int_0^1\left[\int_0^r \frac{\vert f(t)\vert}{(1-t)^{1-\frac{1}{q}}}  \frac{\om(t)}{\omg(t) }dt\right]^p(1-r)^{p\left(1-\frac{1}{q}\right)}\nug(r)dr\lesssim \int_0^1 \vert f(t)\vert^p \nug(t)dt,
\end{align*}
 which holds  if and only if (see \cite[Theorem 1]{Muckenhoupt1972} or \cite[Theorem~3.2]{GoLa})
\begin{align*}
 \sup\limits_{0<r<1} \left( \int_r^1 U^p(t) dt \right)^{\frac{1}{p}}\left(\int_0^r V^{-p'}(t)dt\right)^{\frac{1}{p'}}<\infty,
\end{align*}
where $$U^p(t)=\left\{\begin{array}{cc}
   \nug(t)(1-t)^{p\left(1-\frac{1}{q}\right)} &  \text{ if }0<t<1 \\
    0 & \text{ otherwise}
\end{array}
\right.,\quad V^p(t)=\left\{\begin{array}{cc} \nug(t)(1-t)^{p\left(1-\frac{1}{q}\right)}
   \frac{\omg(t)^p}{\om(t)^p}  &  \text{ if }0<t<1 \\
    0 & \text{ otherwise}
\end{array}
\right. .$$
We observe that
\begin{align*}
   &\left( \int_r^1 U^p(t) dt \right)^{\frac{1}{p}}\left(\int_0^r V^{-p'}(t)dt\right)^{\frac{1}{p'}}\\
   &= \left(\int_r^1 (1-t)^{p\left(1-\frac{1}{q}\right)}\nug(t)dt\right)^{\frac{1}{p}}
   \left(\int_0^r (1-t)^{-p'\left(1-\frac{1}{q}\right)}\frac{ \om (t)^{p'}}{\left(\nug(t)^{1/p}\omg(t)\right)^{p'}}\,dt\right)^{\frac{1}{p'}} 
   \\&\leq (1-r)^{1-\frac{1}{q}}\left(\int_r^1 \nug(t)dt\right)^{\frac{1}{p}}(1-r)^{-(1-\frac{1}{q})}
   \left(\int_0^r \frac{ \om (t)^{p'}}{\left(\nug(t)^{1/p}\omg(t)\right)^{p'}}\,dt\right)^{\frac{1}{p'}}
   \\ & = \left(\int_r^1 \nug(t)dt\right)^{\frac{1}{p}}\left(\int_0^r\frac{ \om (t)^{p'}}{\left(\nug(t)^{1/p}\omg(t)\right)^{p'}}\,dt\right)^{\frac{1}{p'}}\leq (1-r)^{\frac{1}{p}}\nug(r)^{\frac{1}{p}}
 \left(\int_0^r \frac{ \om (t)^{p'}}{\left(\nug(t)^{1/p}\omg(t)\right)^{p'}}
 \,dt \right)^{\frac{1}{p'}}.
\end{align*}
So, bearing in mind \eqref{eq:bergman1}, \eqref{h1}  and \eqref{eq:bergman2},  $H_{\omega}: L^p_{\nug,[0,1)}\to A^p_\nu$ is bounded if \eqref{condicion 1} and the condition
\begin{equation}
    \label{condicion 2}
    \sup\limits_{0<r<1}(1-r)^{\frac{1}{p}}\nug(r)^{\frac{1}{p}}\left(\int_0^r \left(\frac{\om(t)}{\nug(t)^{\frac{1}{p}}}\right)^{p'}\frac{1}{\omg(t)^{p'}}dt \right)^{\frac{1}{p'}}<\infty
\end{equation}
hold. Let us prove that \eqref{condicion 2} follows from \eqref{condicion 1}, this will prove that $H_{\omega}: L^p_{\nug,[0,1)}\to A^p_\nu$ is bounded if \eqref{condicion 1}
holds.

If $1/2\le r<1$, by
 Lemma \ref{caract. pesos doblantes}, 
\begin{align*}
   \int_0^r \frac{\nug(t)}{\omg(t)^p} dt\geq \int_{2r-1}^r \frac{\nug(t)}{\omg(t)^p} dt\geq \frac{\nug(r)(1-r)}{\omg(2r-1)^p} \asymp \frac{\nug(r)(1-r)}{\omg(r)^p},
\end{align*}
and if $0<r<\frac{1}{2}$,
 $ 1\gtrsim \frac{\nug(0)}{\omg(1/2)^p}\gtrsim
\sup_{0<r\le 1/2}  \frac{\nug(r)(1-r)}{\omg(r)^p}$,
\, hence by (\ref{condicion 1})
\begin{equation*}
  \sup\limits_{0<r<1}\frac{\nug(r)^{\frac{1}{p}}(1-r)^{\frac{1}{p}}}{\omg(r)}
  \left(\int_r^1 \left(\frac{\om(t)}{\nug(t)^{\frac{1}{p}}}\right)^{p'}\,dt\right)^{\frac{1}{p'}}<\infty, 
\end{equation*}
that is  
\begin{equation}
\label{cond 1*}
   \int_r^1 \left(\frac{\om(t)}{\nug(t)^{\frac{1}{p}}}\right)^{p'}\,dt\lesssim \left(\frac{\omg(r)}{\nug(r)^{\frac{1}{p}}(1-r)^{\frac{1}{p}}}\right)^{p'}, \quad 0 \le r<1.
\end{equation}
An integration by parts and (\ref{cond 1*}) give 
\begin{align*}
 & \int_0^r \left(\frac{\om(t)}{\nug(t)^{\frac{1}{p}}}\right)^{p'}\frac{1}{\omg(t)^{p'}}dt\\
  &=
  \left[-\omg(t)^{-p'}\int_t^1\left(\frac{\om(s)}{\nug(s)^{\frac{1}{p}}}\right)^{p'} ds\right]_0^r+p'\int_0^r \left(\int_t^1\left(\frac{\om(s)}{\nug(s)^{\frac{1}{p}}}\right)^{p'} ds\right)\omg(t)^{-p'-1}\om(t)dt,
 \\ &\lesssim \frac{1}{\omg(0)^{p'}}\int_0^1\left(\frac{\om(s)}{\nug(s)^{\frac{1}{p}}}\right)^{p'} ds +p'\int_0^r \frac{\om(t)}{\omg(t)}\frac{dt}{((1-t)\nug (t))^{p'/p}}\\ 
 &\leq \frac{1}{\omg(0)^{p'}}\int_0^1\left(\frac{\om(s)}{\nug(s)^{\frac{1}{p}}}\right)^{p'} ds +p'\frac{1}{\nug(r)^{p'/p}}\int_0^r \frac{\om(t)}{\omg(t)}\frac{dt}{(1-t)^{p'/p}}
 \\ & \lesssim \frac{1}{\nug(r)^{p'/p}(1-r)^{\frac{p'}{p}}},\quad 0\le r<1,
\end{align*}
where  the last inequality follows from \eqref{condicion 1} and
\cite[Lemma~2.3]{PPR18} with $\eta(t)=(1-t)^{\frac{p'}{p}-1}$.

Thus, 
\begin{align*}
  \left( \int_0^r \left(\frac{\om(t)}{\nug(t)^{\frac{1}{p}}}\right)^{p'}\frac{1}{\omg(t)^{p'}}dt\right)^{\frac{1}{p'}}\lesssim \frac{1}{  (1-r)^{\frac{1}{p}}\nug(r)^{\frac{1}{p}}}, 
  \quad 0\le r<1,
\end{align*}
and (\ref{condicion 2}) is satisfied. 

\par Conversely, assume that $H_{\omega}:\,L^p_{\nug,[0,1)} \to A^p_{\nu}$ is bounded. We begin with proving
  (\ref{condicion bien definido}). 
By using
 (\ref{eq:lAplpnu}) and Lemma~\ref{Estimaciones integral real nucleos}, 
\begin{equation*}\begin{split}
 \nm{H_{\omega}}^p\nm{\varphi}^p_{L^p_{\nug,[0,1)}} &\ge \nm{H_{\omega}(\varphi)}_{ A^p_\nu}^p
 \\ &\gtrsim \nm{H_{\omega}(\varphi)}_{L^p_{\nug,[0,1)}}^p 
\\ &= \int_0^1\left[\int_0^1\varphi(s)\om(s)\left( \frac{1}{t}\int_0^t B_s^{\om} (x) dx\right) ds\right]^p \nug(t) dt 
  \\ & \gtrsim \left( \int_0^1\varphi(s)\om(s)ds\right)^p,
\end{split}\end{equation*}
for each non-negative $\varphi\in L^p_{\nug,[0,1)}$. So, 
\begin{equation}
\label{1}
 \int_0^1  |\varphi(s) |\om(s)ds\lesssim\left(\int_0^1 |\varphi(t) |^p\nug(t)dt\right)^{\frac{1}{p}}, \, \varphi \in L^p_{\nug,[0,1)}.
\end{equation} 
Now, for each $n \in \mathbb{N}$ 
define
$\varphi_n(t)=\min\set{n,\, \left(\frac{\om(t)}{\nug(t)}\right)^{\frac{p'}{p}}}.$
A direct calculation shows that $\varphi_n^p\nug\le \varphi_n\om$, which together with
  (\ref{1}) applied to $\varphi_n$ and the monotone convergence theorem, gives that \eqref{condicion bien definido} holds.
In order to show (\ref{condicion 1}), consider the family of test functions $\set{\phi_r}_{0<r<1}$ defined as
$$ \phi_r(t)=\left(\frac{\om(t)}{\nug(t)}\right)^{^{\frac{p'}{p}}}\chi_{[r,1)}(t).$$
The inequality \eqref{eq:lAplpnu} yields
\begin{equation}\label{eq:h3}
 \nm{H_{\omega}(\phi_r)}_{L^p_{\nug,[0,1)}}\leq \frac{\pi}{2}\nm{H_{\omega}(\phi_r)}_{A^p_{\nu}}\leq\frac{\pi}{2}\nm{H_{\omega}}\nm{\phi_r}_{L^p_{\nug,[0,1)}}
\lesssim \left(\int_r^1 \left(\frac{\om(t)}{\nug(t)^{\frac{1}{p}}}\right)^{p'}\,dt\right)^{\frac{1}{p}},
\end{equation}
and 
 Lemma \ref{Estimaciones integral real nucleos} gives
\begin{align*}
 \nm{H_{\omega}(\phi_r)}_{L^p_{\nug,[0,1)}}^p  &
  \geq \int_0^r\left[ \int_r^1
  \left(\frac{\om(s)}{\nug(s)^{\frac{1}{p}}}\right)^{p'}
  \left(\frac{1}{t}\int_0^t B_s^{\om} (x) dx\right)ds\right]^p\nug(t)dt
  \\ &\asymp \int_0^r\left[
  \int_r^1
 \left(\frac{\om(s)}{\nug(s)^{\frac{1}{p}}}\right)^{p'}
 \left(1+\int_0^{st} \frac{1}{\omg(x)(1-x)} dx\right)ds\right]^p
 \nug(t)\,dt.
\end{align*}
If $s>r>t$, Lemma \ref{caract. pesos doblantes} implies
\begin{align*}
  1+\int_0^{st} \frac{1}{\omg(x)(1-x)} dx\geq 1+\int_{2st-1}^{st} \frac{1}{\omg(x)(1-x)} dx
  \gtrsim   \frac{1}{\omg(st)}
  \gtrsim \frac{1}{\omg(t)},\quad  \frac{1}{2}\le st <1,
\end{align*}
and 
\begin{align*}
  1+\int_0^{st} \frac{1}{\omg(x)(1-x)} dx
  \gtrsim \frac{1}{\omg(t)}, \quad 
  \quad 0\le st\leq\frac{1}{2}.
\end{align*}
Then,
$$\nm{H_{\omega}(\phi_r)}_{L^p_{\nug,[0,1)}}^p \gtrsim
 \left(\int_r^1\left(\frac{\om(s)}{\nug(s)^{\frac{1}{p}}}\right)^{p'}\,ds\right)^p
 \left(\int_0^r\frac{\nug(t)}{\omg(t)^p}dt\right), $$
 which together with \eqref{eq:h3} and \eqref{condicion bien definido} yields
 (\ref{condicion 1}). This finishes the proof.

\end{Prf}

\section{Hilbert-type operators on Hardy spaces}\label{sec:Hardy}

In this section we will prove Theorems~\ref{th:infty}, \ref{th:h1h1} and \ref{th:lphp}.

\begin{Prf}{\em{Theorem~\ref{th:infty}}.}
We borrow the argument from the proof of \cite[Theorem~1]{PR19}.
Assume that $\om\in \DD$. Then,

\begin{align*}
    \abs{H_{\omega}(f)(0)}\asymp\left|\int_0^1 f(t)\om (t)dt\right|\leq \left(\int_0^1 \om(t)dt\right)\nm{f}_{H^{\infty}}\lesssim \nm{f}_{H^{\infty}}.
\end{align*}
Moreover,
 $\displaystyle{H_{\omega}(f)'(z)=\int_0^1f(t)G^{\om}(z,t)\om(t)dt}$,
with 
$$G^{\om}(z,t)=\sum\limits_{n=1}^{\infty} \frac{t^n z^{n-1}}{2\om_{2n+1}}\frac{n}{n+1}.$$ 
So, by Lemma~\ref{caract. pesos doblantes}(iv) 
\begin{align*}
    \sup\limits_{z \in \D}(1-\abs{z})\abs{H_{\omega}(f)'(z)}&\leq \nm{f}_{H^{\infty}} \sup\limits_{z \in \D}(1-\abs{z}) \int_0^1\left( \sum\limits_{n=1}^{\infty} \frac{t^n \abs{z}^{n-1}}{2\om_{2n+1}}\frac{n}{n+1}\right) \om(t)dt
    \\& \leq \nm{f}_{H^{\infty}} \sup\limits_{z \in \D}(1-\abs{z})  \sum\limits_{n=1}^{\infty} \frac{ \abs{z}^{n-1}\om_n}{2\om_{2n+1}}.
    \\ &  \lesssim 
     \nm{f}_{H^{\infty}} \sup\limits_{z \in \D}(1-\abs{z})  \sum\limits_{n=1}^{\infty} \abs{z}^{n-1} \lesssim  \nm{f}_{H^{\infty}},
\end{align*}
consequently $H_{\omega}: H^\infty \to \mathcal{B}$ is bounded. 
On the other hand, if $H_{\omega}: H^\infty \to \mathcal{B}$ is bounded, for each $N \in \N$
\begin{align*}
    \infty>\nm{H_{\omega} (1)}_{\B} &\geq \sup\limits_{z \in \D}(1-\abs{z}) 
    \left|\int_0^1 G^{\om} (z,t)\om(t)\,dt\right|
    \\ &\geq \sup\limits_{x \in (0,1)}(1-x) \int_0^1 G^{\om} (x,t)\om(t)dt\\
   & \gtrsim  
    \sup\limits_{x \in (0,1)}(1-x) \sum\limits_{n=1}^{N+1} x^{n-1}\frac{\om_n}{\om_{2n+1}}
    \\&
    \gtrsim \frac{1}{N}\sum\limits_{n=E\left(\frac{3N}{4}\right)}^{N+1}  \frac{\om_n}{\om_{2n+1}} \gtrsim \frac{\om_{N+1}}{\om_{2E\left(\frac{3N}{4}\right)+1}},
\end{align*}
that is 
$ \om_{N+1}\lesssim \om_{2E\left(\frac{3N}{4}\right)+1},$ $N\in\N$.
By iterating this inequality, we get
$\om_{N+1}\lesssim\om_{2(N+1)}$ for any $N\geq 24$, and so $\om \in \DD$  by Lemma~\ref{caract. pesos doblantes}.
\end{Prf}
\medskip

\medskip
\begin{Prf}{\em{Theorem~\ref{th:h1h1}.}}
The equivalence between (ii) and (iii) is a byproduct of \cite[Theorem~9.3]{Duren}.

 By using Lemma \ref{triangulo} and the fact that
$\mu_\om(z)=\om(z)\left(1+\int_0^{ \abs{z}}\frac{ds}{\omg(s)}\right)\chi_{[0,1)}(z)\, dA(z)$
 is a $1$-Carleson measure for $H^1$
\begin{align*}
   \|  H_{\omega}(f)\|_{H^1}
    &\leq  \sup_{0\le r<1}\int_0^1 \abs{f(t)}\om(t) M_1(r, K^\om_t)\,dt
    \\&\lesssim \sup_{0\le r<1}\int_0^1 \abs{f(t)}\om(t)\left(1+\int_0^{r t}\frac{ds}{\omg(s)}\right)dt 
    \\ & \leq
    \int_0^1 \abs{f(t)}\om(t)\left(1+\int_0^{ t}\frac{ds}{\omg(s)}\right)\,dt
    \\&\lesssim \| f\|_{H^1},
\end{align*}
so (i) holds. 

Conversely, assume that $H_\om\, : H^1\to H^1$ is bounded, then for any
$a\in [0,1)$,  consider the test functions $f_a(z)=\frac{1-a^2}{(1-az)^2}$,
with $\nm{f_a}_{H^1}=1$. Then,
 by Féjer-Riesz inequality \cite[Theorem~~3.13]{Duren}  and Lemma \ref{Estimaciones integral real nucleos},
\begin{equation}
    \begin{split}
    \label{2*}
      \infty>C\ge \nm{H_{\omega}(f_a)}_{H^1}&\gtrsim  \nm{H_{\omega}(f_a)}_{L^1_{[0,1)}}\asymp\int_0^1\left(\int_0^1 \frac{1-a^2}{(1-as)^2}\left(\frac{1}{t}\int_0^t B_s^{\om}(x)dx\right)  \om (s)ds\right)dt\\
      &\asymp
      \int_0^1\left(\int_0^1 \frac{1-a^2}{(1-as)^2}\left(1+\int_0^{st} \frac{1}{\omg(x)(1-x)}dx\right) \om (s)ds\right)dt
      \\& \geq \int_0^1\left(\int_a^1 \frac{1-a^2}{(1-as)^2} \om (s)ds\right)dt
      \gtrsim \frac{1}{1-a}\int_a^1 \om (s)ds,
    \end{split}
\end{equation}
and 
\begin{equation*}
    \begin{split}
      \infty>C\ge \nm{H_{\omega}(f_a)}_{H^1} & \gtrsim \int_0^1\left(\int_0^1 \frac{1-a^2}{(1-as)^2}\left(1+\int_0^{st} \frac{1}{\omg(x)(1-x)}dx\right) \om (s)ds\right)dt \\&=
       \int_0^1\frac{1-a^2}{(1-as)^2} \om (s)\left(\int_0^1 \left(1+\int_0^{st} \frac{1}{\omg(x)(1-x)}dx\right)dt\right)ds
      \\&\gtrsim \frac{1}{1-a}\int_a^1 \om (s)\left(\int_0^s \left(1+\int_0^{st} \frac{1}{\omg(x)(1-x)}dx\right)dt\right)\,ds.
    \end{split}
\end{equation*}
Now bearing in mind that $t<s$ and $\om \in \DD$, $\displaystyle{1+\int_0^{st} \frac{1}{\omg(x)(1-x)}dx\gtrsim \frac{1}{\omg(t)}}$, so
\begin{equation}
    \begin{split}
    \label{3*}
  \infty> C\ge   \nm{H_{\omega}(f_a)}_{H^1}
      &\gtrsim
       \frac{1}{1-a}\int_a^1 \om (s)\left(\int_0^s\frac{dt}{\omg(t)}\right)ds.
    \end{split}
\end{equation}
Therefore joining (\ref{2*}) and (\ref{3*}), 
$$ \sup_{a\in [0,1)}\frac{1}{1-a}\int_a^1 w(t)\left(1+\int_0^t \frac{ds}{\omg(s)}\right)\,dt<\infty.$$
This finishes the proof.
\end{Prf}

\medskip
Next, we turn to study the action of the Hilbert-type type operator  $H_\om$ on Hardy spaces $H^p$, $1<p<\infty$. With this aim, we need 
to  introduce some extra notation and prove some preliminary results.

If  $g(z)=\sum\limits_{k=0}^{\infty} \widehat{g}(k) z^k \in \H(\D)$ and $n_1, \,n_2 \in \N\cup\set{0}$, $n_1<n_2$, write
$ S_{n_1,n_2} g(z)=\sum\limits_{k=n_1}^{n_2-1} \widehat{g}(k) z^k$ and 
$\De^n g =\sum\limits_{k \in I(n)} \widehat{g}(k)z^k$, where 
$I(n)=\{k\in\N: 2^n\le k<2^{n+1}\}$,\, $n\in \N\cup\{0\}$.    
In particular, 
$\De^n=\De^n(\frac{1}{1-z}) =\sum\limits_{k \in I(n)}z^k$  has the following property \cite[Lemma~2.7]{CPPR}
\begin{equation}\label{eq:Dn}
 \| \De^n\|_{H^p}\asymp 2^{n(1-1/p)}, \quad n\in \N\cup\{0\},\quad 1<p<\infty.
 \end{equation}

The next result can be found in \cite[Lemma~3.4]{LaNoPa}.
\begin{letterlemma}
\label{lema de los lambda}
Let $1<p<\infty$ and $\la=\set{\la_k}_{k=0}^{\infty}$ be a positive and monotone sequence. Let $ g(z)=\sum\limits_{k=0}^{\infty}b_k z^k$ and $(\la g)(z)=\sum\limits_{k=0}^{\infty}\la_k b_k z^k$.
\begin{itemize}
    \item[(a)] If $\set{\la_k}_{k=0}^{\infty}$ is nondecreasing, there exists a constant $C>0$ such that
    $$ C^{-1}\la_{n_1}\nm{S_{n_1,n_2}g}_{H^p}\leq \nm{S_{n_1,n_2}(\la g)}_{H^p}\leq C \la_{n_2}\nm{S_{n_1,n_2}g}_{H^p}.$$
    \item[(b)] If $\set{\la_k}_{k=0}^{\infty}$ is nonincreasing, there exists a constant $C>0$ such that
    $$ C^{-1}\la_{n_2}\nm{S_{n_1,n_2}g}_{H^p}\leq \nm{S_{n_1,n_2}(\la g)}_{H^p}\leq C \la_{n_1}\nm{S_{n_1,n_2}g}_{H^p}.$$
\end{itemize}

\end{letterlemma}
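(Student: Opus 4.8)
The plan is to reduce everything to one classical fact: for $1<p<\infty$ the Taylor partial–sum operators are uniformly bounded on $H^p$ (M.~Riesz), so that the block projections $S_{n_1,n_2}=S_{0,n_2}-S_{0,n_1}$ satisfy $\nm{S_{n_1,n_2}h}_{H^p}\le C_p\nm{h}_{H^p}$ with $C_p$ independent of $n_1,n_2$. Granting this, I would establish the two \emph{upper} inequalities in (a) and (b) by summation by parts, and then deduce the two \emph{lower} inequalities by applying those upper bounds to the reciprocal sequence $\set{1/\la_k}$, which is again monotone, of the opposite type.

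First I would fix $g$ and write $\s_j=S_{n_1,j+1}g=\sum_{k=n_1}^{j}b_kz^k$ for $n_1\le j\le n_2-1$, with the convention $\s_{n_1-1}=0$. Abel summation then gives the identity
\begin{equation*}
S_{n_1,n_2}(\la g)=\la_{n_2-1}\s_{n_2-1}+\sum_{k=n_1}^{n_2-2}(\la_k-\la_{k+1})\s_k .
\end{equation*}
The decisive point is that each inner sum is itself a block projection of $S_{n_1,n_2}g$, namely $\s_j=S_{n_1,j+1}(S_{n_1,n_2}g)$, so the uniform boundedness above yields $\nm{\s_j}_{H^p}\le C_p\nm{S_{n_1,n_2}g}_{H^p}$ for every $j$. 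Taking $H^p$ norms in the identity and using the triangle inequality, I obtain
\begin{equation*}
\nm{S_{n_1,n_2}(\la g)}_{H^p}\le C_p\nm{S_{n_1,n_2}g}_{H^p}\Big(\la_{n_2-1}+\sum_{k=n_1}^{n_2-2}\sabs{\la_{k+1}-\la_k}\Big).
\end{equation*}
Monotonicity now makes the sum telescope: for a nondecreasing $\la$ it equals $\la_{n_2-1}-\la_{n_1}$, whence the factor is $\le 2\la_{n_2-1}\le 2\la_{n_2}$; for a nonincreasing $\la$ it equals $\la_{n_1}-\la_{n_2-1}$, whence the factor is $\le\la_{n_1}$. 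These are precisely the upper estimates in (a) and (b).

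To obtain the lower estimates I would exploit that the statement is self–dual under $\la\mapsto\set{1/\la_k}$. If $\la$ is nondecreasing, then $\mu_k=1/\la_k$ is nonincreasing and $S_{n_1,n_2}g=S_{n_1,n_2}\big(\mu\,(\la g)\big)$ on the block; applying the upper estimate of (b) to $\mu$ and the function $\la g$ gives $\nm{S_{n_1,n_2}g}_{H^p}\le C\mu_{n_1}\nm{S_{n_1,n_2}(\la g)}_{H^p}=C\la_{n_1}^{-1}\nm{S_{n_1,n_2}(\la g)}_{H^p}$, i.e.\ the lower bound in (a). Symmetrically, for nonincreasing $\la$ the sequence $\set{1/\la_k}$ is nondecreasing, and the upper estimate of (a) produces the lower bound in (b).

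The step I expect to carry the real weight is the uniform $H^p$–boundedness of the partial sums, and this is exactly where the restriction $1<p<\infty$ is indispensable, since the conclusion breaks down at the endpoints $p=1,\infty$; everything else is the bookkeeping of Abel summation together with the telescoping of the monotone increments. The one place to be careful is to cut the inner partial sums $\s_j$ at $n_1$ rather than at $0$, so that $\s_j=S_{n_1,j+1}(S_{n_1,n_2}g)$ and a single constant $C_p$ controls all of them at once; this is what allows the right–hand side to involve $\nm{S_{n_1,n_2}g}_{H^p}$ rather than $\nm{g}_{H^p}$.
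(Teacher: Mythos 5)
Your argument is correct: the Abel-summation identity is right (with the crucial normalization $\s_{n_1-1}=0$ so that each $\s_j$ is a block projection of $S_{n_1,n_2}g$), the telescoping of the monotone increments gives exactly the factors $2\la_{n_2}$ and $\la_{n_1}$, and the duality $\la\mapsto\set{1/\la_k}$ legitimately converts the two upper bounds into the two lower bounds. The paper does not prove this lemma at all --- it is quoted verbatim from Lanucha--Nowak--Pavlovi\'c \cite[Lemma~3.4]{LaNoPa} --- and your proof is essentially the standard argument behind that reference: uniform $H^p$-boundedness of the partial-sum operators (M.~Riesz, hence the restriction $1<p<\infty$) plus summation by parts.
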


 Let us recall that
\begin{equation*}
    \widetilde{H_{\omega}}(f)(z)=\int_0^1 \abs{f(t)}\left(\frac{1}{z}\int_0^z B^{\om}_t(\z)d\z\right)\om(t)\,dt.
\end{equation*}
 
\begin{lemma}
\label{equivalencia norma}
 Let $\om \in \DD$ and $1<p<\infty$. Then, 
$$  \nm{\widetilde{H_{\omega}} (f)}_{H^p}\asymp \nm{\widetilde{H_{\omega}} (f)}_{D^p_{p-1}}\asymp \nm{\widetilde{H_{\omega}} (f)}_{HL(p)}, \quad f \in L^1_{\om,[0,1)}.$$ 
\end{lemma}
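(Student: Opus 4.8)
I want to show that for $\omega \in \DD$, $1<p<\infty$ and any $f \in L^1_{\omega,[0,1)}$, the three norms $\nm{\widetilde{H_{\omega}}(f)}_{H^p}$, $\nm{\widetilde{H_{\omega}}(f)}_{D^p_{p-1}}$ and $\nm{\widetilde{H_{\omega}}(f)}_{HL(p)}$ are comparable.

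**Key structural observation.** The function $g = \widetilde{H_{\omega}}(f)$ has Taylor coefficients $\widehat{g}(n) = \frac{1}{2(n+1)\omega_{2n+1}} \int_0^1 |f(t)| t^n \omega(t)\,dt$, which are **nonnegative**. By the inclusions \eqref{eq:HpDpp<2} and \eqref{eq:HpDpp>2}, two of the three inequalities come for free in each range of $p$, so the real content is proving the reverse inclusions — i.e., that for functions with nonnegative (and, as I'll argue, roughly decreasing) coefficients, $H^p$, $D^p_{p-1}$ and $HL(p)$ all coincide. This is exactly the regime where the cited result \cite[Proof of Theorem~A]{Pavdec} applies, but I expect the clean route here is via the dyadic decomposition machinery set up just above the lemma.

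**The plan.** First I would verify that the coefficient sequence $\{\widehat{g}(n)\}$ is essentially decreasing: using Lemma~\ref{caract. pesos doblantes}(iii) to write $\omega_{2n+1} \asymp \omg(1-\tfrac{1}{2n+1})$ and the doubling of $\omg$, one sees the weights $\frac{1}{(n+1)\omega_{2n+1}}$ vary slowly, while $\int_0^1 |f(t)| t^n \omega(t)\,dt$ is genuinely decreasing in $n$; I would make precise that consecutive coefficients satisfy $\widehat{g}(n) \asymp \widehat{g}(m)$ whenever $n,m \in I(j)$ lie in the same dyadic block. Granting this, I would estimate each norm blockwise. The heart of the argument is the decomposition norm for $D^p_{p-1}$ from \cite[Theorem~2.1]{MatPav}, which gives $\nm{g}_{D^p_{p-1}}^p \asymp \sum_j 2^{j(p-1)} \nm{\Delta^j g}_{H^p}^p$ (modulo the $|f(0)|^p$ term). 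Because the coefficients are comparable within each block, $\nm{\Delta^j g}_{H^p} \asymp \widehat{g}(2^j)\,\nm{\Delta^j}_{H^p} \asymp \widehat{g}(2^j)\, 2^{j(1-1/p)}$ by \eqref{eq:Dn} and Lemma~\ref{lema de los lambda}, and likewise $\nm{\Delta^j g}_{HL(p)}^p \asymp 2^{j(p-2)} \cdot 2^j \widehat{g}(2^j)^p = 2^{j(p-1)}\widehat{g}(2^j)^p$. Substituting, all three quantities reduce to the single series $\sum_j 2^{j(p-1)} \widehat{g}(2^j)^p \, 2^{-j}\cdot 2^{j} = \sum_j 2^{j(2p-2)/\,}\widehat{g}(2^j)^p$ up to fixed exponents, exhibiting the desired comparability.

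**The main obstacle.** The delicate point is the within-block comparability of the coefficients, and in particular controlling $\frac{1}{(n+1)\omega_{2n+1}}$ as $n$ ranges over a dyadic block. Here the doubling hypothesis $\omega \in \DD$, via Lemma~\ref{caract. pesos doblantes}(iii)–(iv), is exactly what guarantees $\omega_{2n+1} \asymp \omega_{2m+1}$ for $n \asymp m$, so that the weight factor is slowly varying; without doubling this step fails. Once this is in hand, applying Lemma~\ref{lema de los lambda} to the monotone multiplier sequence $\lambda_n = \frac{1}{2(n+1)\omega_{2n+1}}$ converts the problem for $g$ into the corresponding problem for the generating function $\sum_n \big(\int_0^1 |f| t^n \omega\big) z^n$, whose coefficients are genuinely monotone, and the three-norm identification then follows from \eqref{eq:Dn}, the decomposition norm of \cite[Theorem~2.1]{MatPav}, and the definition of $HL(p)$. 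I expect the bookkeeping of exponents in the blockwise sums to be routine but the source of all sign errors, so I would carry those out carefully.
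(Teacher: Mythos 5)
Your overall strategy coincides with the paper's: compute the Taylor coefficients of $g=\widetilde{H_{\omega}}(f)$, use the decomposition norm of \cite[Theorem~2.1]{MatPav} together with Lemma~\ref{lema de los lambda} and \eqref{eq:Dn} to identify $\nm{g}_{D^p_{p-1}}$ with $\nm{g}_{HL(p)}$, and then squeeze $\nm{g}_{H^p}$ between them via \eqref{eq:HpDpp<2} and \eqref{eq:HpDpp>2}. However, the step you yourself flag as the delicate one is resolved incorrectly, and this is a genuine gap. Write $\mu_n=\int_0^1\abs{f(t)}t^n\om(t)\,dt$, so that $\widehat{g}(n)=\mu_n/\bigl(2(n+1)\om_{2n+1}\bigr)$. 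Doubling does give $\om_{2n+1}\asymp\om_{2m+1}$ for $n,m$ in the same dyadic block (Lemma~\ref{caract. pesos doblantes}(iv)), so that factor is indeed slowly varying; but the moment sequence $\mu_n$, while nonincreasing, is \emph{not} comparable within dyadic blocks. For instance, for $f=\chi_{[0,1/2]}$ and $\om\equiv1$ one gets $\widehat{g}(n)=2^{-(n+1)}/(n+1)$, whence $\widehat{g}(2^{j+1}-1)/\widehat{g}(2^j)\to0$ superexponentially. Consequently your key blockwise identity $\nm{\De^j g}_{H^p}\asymp\widehat{g}(2^j)\,2^{j(1-1/p)}$ fails, and the reduction of all three norms to the sampled series $\sum_j 2^{j(p-1)}\widehat{g}(2^j)^p$ is not valid: that sampled series is not equivalent to $\nm{g}_{HL(p)}^p$ without within-block comparability of the coefficients.

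The repair --- and what the paper actually does --- uses only the \emph{monotonicity} of $\mu_n$. Lemma~\ref{lema de los lambda} yields the two one-sided block estimates $\nm{\De^j g'}_{H^p}\gtrsim \mu_{2^{j+1}+1}\,2^{j(1-1/p)}/\om_{2^{j+1}}$ and $\nm{\De^j g'}_{H^p}\lesssim \mu_{2^{j}+1}\,2^{j(1-1/p)}/\om_{2^{j+1}}$ (all moments of $\om$ indexed within a block being comparable by doubling), and the resulting block sums are then compared with the full sum $\sum_k\mu_k^p/\bigl((k+1)^2\om_{2k+1}^p\bigr)\asymp\nm{g}_{HL(p)}^p$ by \emph{shifting the dyadic index by one}: for the lower bound $2^{j}\mu_{2^{j+1}+1}^p\ge\sum_{k\in I(j+1)}\mu_k^p$, and for the upper bound $2^{j}\mu_{2^{j}+1}^p\lesssim\sum_{k\in I(j-1)}\mu_k^p$, both consequences of monotonicity alone. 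With this replacement your argument goes through and agrees with the paper's. A secondary slip worth correcting: the decomposition norm is $\nm{g}_{D^p_{p-1}}^p\asymp\abs{g(0)}^p+\sum_j2^{-jp}\nm{\De^jg'}_{H^p}^p\asymp\abs{g(0)}^p+\sum_j\nm{\De^jg}_{H^p}^p$, without the extra factor $2^{j(p-1)}$ in your display; with that factor the $D^p_{p-1}$ and $HL(p)$ block sums would fail to match even for coefficients that are constant on blocks.
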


\begin{proof}
\cite[Theorem~2.1]{MatPav} gives
 
 \begin{align*}
     \nm{\widetilde{H_{\omega}} (f)}_{D^p_{p-1}}^p\asymp\left(\int_0^1 \abs{f(t)}\om(t)dt\right)^p+ \sum\limits_{n=0}^{\infty} 2^{-np}\nm{\De^n (H_{\omega}(f))'}_{H^p}^p.
 \end{align*}
Now, observe that \begin{align*}
  (\widetilde{H_{\omega}} (f))'(z)&=\sum\limits_{n=1}^{\infty}\frac{n}{2(n+1)\om_{2n+1}}\left(\int_0^1 \abs{f(t)}t^n\om(t)dt\right)z^{n-1}
  \\ &=\sum\limits_{n=0}^{\infty}\frac{n+1}{2(n+2)\om_{2n+3}}\left(\int_0^1 \abs{f(t)}t^{n+1}\om(t)dt\right)z^n.
  \end{align*}
  Then, by using Lemma~\ref{lema de los lambda} and \eqref{eq:Dn}
 \begin{align*}
  \nm{\De^n (\widetilde{H_{\omega}}(f))'}_{H^p}&\gtrsim \frac{2^n+1}{2^{n+1}+2}\frac{\nm{\De^n}_{H^p}}{\om_{2^{n+1}+3}}\left(\int_0^1 \abs{f(t)}t^{2^{n+1}+1}\om(t)dt\right)\\
  &\asymp\frac{2^{n(1-1/p)}}{\om_{2^{n+1}+3}}\left(\int_0^1 \abs{f(t)}t^{2^{n+1}+1}\om(t)dt\right),
 \end{align*}
and
\begin{align*}
  \nm{\De^n (\widetilde{H_{\omega}}(f))'}_{H^p}&\lesssim \frac{2^{n+1}+1}{2^{n}+2}\frac{\nm{\De^n}_{H^p}}{\om_{2^{n+2}+3}}\left(\int_0^1 \abs{f(t)}t^{2^{n}+1}\om(t)dt\right)\\
  &\asymp\frac{2^{n(1-1/p)}}{\om_{2^{n+2}+3}}\left(\int_0^1 \abs{f(t)}t^{2^{n}+1}\om(t)dt\right).
 \end{align*}
  By Lemma \ref{caract. pesos doblantes}, $\om_{2^{n+1}+3}\asymp \om_{2^{n+2}+3}\asymp \om_{2^{n+1}} $, so
 \begin{align*}
     \nm{\widetilde{H_{\omega}} (f)}_{D^p_{p-1}}^p&\gtrsim\left(\int_0^1 \abs{f(t)}\om(t)dt\right)^p+ \sum\limits_{n=0}^{\infty} 2^{-n}\frac{1}{ (\om_{2^{n+1}})^p }\left(\int_0^1 \abs{f(t)}t^{2^{n+1}+1}\om(t)dt\right)^p\\
     &\gtrsim \left(\int_0^1 \abs{f(t)}\om(t)dt\right)^p+ \sum\limits_{n=0}^{\infty} 2^{-2n}\frac{1}{ (\om_{2^{n+1}})^p }\sum_{k=2^{n+1}+1}^{2^{n+2}}
     \left(\int_0^1 \abs{f(t)}t^{k}\om(t)dt\right)^p 
     \\&\gtrsim \sum\limits_{k=0}^{\infty}\frac{1}{ (\om_{2k+1})^p(k+1)^2 }\left(\int_0^1 \abs{f(t)}t^{k}\om(t)dt\right)^p\asymp\nm{\widetilde{H_{\omega}} (f)}_{HL(p)}^p
 \end{align*}
 and
 \begin{align*}
     \nm{\widetilde{H_{\omega}} (f)}_{D^p_{p-1}}^p&\lesssim\left(\int_0^1 \abs{f(t)}\om(t)dt\right)^p+ \sum\limits_{n=0}^{\infty} 2^{-n}\frac{1}{ (\om_{2^{n+1}})^p }\left(\int_0^1 \abs{f(t)}t^{2^{n}}\om(t)dt\right)^p
     \\ &\lesssim \sum\limits_{k=0}^{\infty}\frac{1}{ (\om_{2k+1})^p(k+1)^2 }\left(\int_0^1 \abs{f(t)}t^{k}\om(t)dt\right)^p\asymp\nm{\widetilde{H_{\omega}} (f)}_{HL(p)}^p.
     \end{align*}
Bearing in mind \eqref{eq:HpDpp<2} and \eqref{eq:HpDpp>2}, this finishes the proof.
\end{proof}

The following result shows that 
\eqref{eq:Hprestriction} can be extended (with an imprecise constant) to
$H^1\left(p,q,p\left(1-\frac{1}{q}\right)\right)$ spaces.

\begin{lemma}
\label{desigualdad Lp y H1(p,q, p(1-1/q))}
Let  $p\in (0,\infty)$ and $1<q<\infty$. Then,
$$\int_0^1 M^p_\infty(s,f) ds\leq C \| f\|_{H^1\left(p,q,p\left(1-\frac{1}{q}\right)\right)}^p, \quad f\in \H(\D).
$$
\end{lemma}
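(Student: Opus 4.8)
The plan is to control the radial maximum $M_\infty(s,f)$ by integrating a maximal estimate for $f'$, and then to reduce the whole statement to a one-dimensional weighted Hardy inequality. First I would prove the pointwise bound
\[
M_\infty(t,f')\lesssim \frac{1}{(1-t)^{1/q}}\,M_q\!\left(\tfrac{1+t}{2},f'\right),\qquad 0<t<1,
\]
which comes from the subharmonicity of $\abs{f'}^q$: for $\abs{z}=t$ the disc $D\!\left(z,\frac{1-t}{2}\right)$ sits inside the annulus $\{\frac{3t-1}{2}<\abs{w}<\frac{1+t}{2}\}$, whose area is comparable to $1-t$; averaging $\abs{f'}^q$ over $D\!\left(z,\frac{1-t}{2}\right)$ and dividing by its area $\asymp(1-t)^2$ yields exactly the factor $(1-t)^{-1/q}$ (for $t\le\frac12$ one has $1-t\asymp1$ and the bound is trivial). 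Combining this with the elementary $M_\infty(s,f)\le\abs{f(0)}+\int_0^sM_\infty(t,f')\,dt$ and with $(a+b)^p\lesssim a^p+b^p$, I obtain
\[
\int_0^1 M_\infty^p(s,f)\,ds\lesssim\abs{f(0)}^p+\int_0^1\!\left(\int_0^s\frac{M_q\!\left(\frac{1+t}{2},f'\right)}{(1-t)^{1/q}}\,dt\right)^{\!p}ds.
\]

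The substitutions $u=\frac{1+t}{2}$ and $v=\frac{1+s}{2}$ turn the last term into $\int_{1/2}^1\bigl(\int_{1/2}^v G(u)\,du\bigr)^p\,dv$ up to a constant, where $G(u)=M_q(u,f')(1-u)^{-1/q}$. Since $\int_{1/2}^1 G(u)^p(1-u)^p\,du=\int_{1/2}^1 M_q^p(u,f')(1-u)^{p(1-1/q)}\,du\le\nm{f}_{H^1\left(p,q,p\left(1-\frac1q\right)\right)}^p$, the whole matter reduces to the weighted Hardy inequality
\[
\int_{1/2}^1\!\left(\int_{1/2}^v G(u)\,du\right)^{\!p}dv\lesssim\int_{1/2}^1 G(u)^p(1-u)^p\,du,
\]
for the specific function $G$ just defined. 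The crucial structural fact is that this $G$ is nondecreasing, being a product of the nondecreasing functions $u\mapsto M_q(u,f')$ and $u\mapsto(1-u)^{-1/q}$.

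To finish I would establish this Hardy inequality, distinguishing three ranges of $p$. For $p>1$ it holds for every nonnegative $G$ and follows from the Muckenhoupt criterion already invoked in Section~\ref{sec:Bergman}: writing $x=1-v$, $y=1-u$ one checks that $\bigl(\int_r^1\,dv\bigr)^{1/p}\bigl(\int_{1/2}^r(1-u)^{-p'}\,du\bigr)^{1/p'}\asymp(1-r)^{1/p}(1-r)^{-1/p}=1$ is bounded uniformly in $r$. For $p=1$ it is an equality by Fubini's theorem. The delicate case is $0<p<1$, where the inequality is false for general $G$, so the monotonicity of $G$ must be used. I would argue dyadically: split $[1/2,1)$ into $J_k=[1-2^{-k},1-2^{-k-1})$, set $g_k=\sup_{J_k}G$, bound $\int_{1/2}^v G\le\sum_{j\le k}2^{-j-1}g_j$ for $v\in J_k$, and then use the subadditivity $(\sum a_j)^p\le\sum a_j^p$ valid for $p\le1$ to collapse the double sum to $\sum_k 2^{-k(p+1)}g_k^p$; monotonicity then gives $\int_{J_k}G^p\gtrsim 2^{-k}g_{k-1}^p$, so this last series is controlled by $\int_{1/2}^1 G^p(1-u)^p\,du$.

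I expect this $0<p<1$ step to be the only real obstacle, since the pointwise estimate and the change of variables are routine and the ranges $p\ge1$ are classical; the point is that passing through $M_\infty(t,f')$ loses nothing precisely because the relevant $G$ is monotone. For the subrange $1<q<p$ one could alternatively bypass the argument entirely, combining the restriction estimate \eqref{eq:Hprestriction} (letting $s\to1^-$) with Lemma~\ref{le:Hpmixto} to get $\int_0^1M_\infty^p(t,f)\,dt\le\pi\nm{f}_{H^p}^p\lesssim\nm{f}_{H^1\left(p,q,p\left(1-\frac1q\right)\right)}^p$.
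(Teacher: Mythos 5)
Your argument is correct, and it is a genuinely different route from the one in the paper. The paper splits into three cases and outsources each to a different reference: for $p<q$ it combines \cite[Lemma~3.4]{GPP06} (a pointwise subharmonicity estimate applied to $f$ itself) with the Littlewood--Paley equivalence of \cite[Theorem~1.1]{PavP} for the weight $(1-r)^{-p/q}$; for $1<q<p$ it uses exactly the shortcut you mention at the end, namely \eqref{eq:Hprestriction} together with Lemma~\ref{le:Hpmixto}; and for $p=q$ it cites \cite[Lemma~4]{GaGiPeSis}. You instead apply the subharmonicity estimate to $f'$, integrate along radii, and reduce everything to the single weighted Hardy inequality
$$\int_{1/2}^1\left(\int_{1/2}^v G(u)\,du\right)^{p}dv\lesssim\int_{1/2}^1 G(u)^p(1-u)^p\,du,\qquad G(u)=M_q(u,f')(1-u)^{-1/q},$$
which you then verify uniformly in $p$: via the Muckenhoupt criterion for $p>1$ (where the $A_p$-type product is identically comparable to $1$), by Fubini for $p=1$, and by a dyadic argument exploiting the monotonicity of $G$ for $0<p<1$ --- the last being precisely where a general weighted Hardy inequality would fail and where your observation that $G$ is nondecreasing (as a product of the nondecreasing functions $M_q(\cdot,f')$ and $(1-\cdot)^{-1/q}$) does the work that \cite{PavP} does in the paper. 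The details check out: the annulus containment gives the factor $(1-t)^{-1/q}$, the index shift $\int_{J_k}G^p\gtrsim 2^{-k}g_{k-1}^p$ is legitimate since $\inf_{J_k}G\ge\sup_{J_{k-1}}G$, and the subadditivity $(\sum a_j)^p\le\sum a_j^p$ is valid for $p\le1$. What your approach buys is a self-contained and unified proof with only one elementary external ingredient (the Muckenhoupt criterion, already used elsewhere in the paper); what the paper's approach buys is brevity at the cost of three citations and a case analysis that obscures why the result holds for all $0<p<\infty$.
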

\begin{proof}
We split the proof according to the values of $p$ and $q$.
If $p<q<\infty$  by applying \cite[Lemma~3.4]{GPP06} and
 \cite[Theorem~1.1]{PavP} with $\om(r)=(1-r)^{-\frac{p}{q}}$,
\begin{align*}
 \int_0^1 M_{\infty}^p(s,f)\, ds &\lesssim \int_0^1 M_q^p\left(\frac{1+s}{2},f\right)(1-s)^{-\frac{p}{q}}ds
 \\ & \lesssim  \int_0^1 M_q^p(r,f)(1-r)^{-\frac{p}{q}}dr \\&
 \lesssim  \left(\abs{f(0)}^p+\int_0^1 M_q^p(r, f')(1-r)^{p\left(1-\frac{1}{q}\right)}dr\right), \quad f\in \H(\D).
\end{align*}
Moreover, if $1<q<p<\infty$,  
\eqref{eq:Hprestriction} and Lemma~\ref{le:Hpmixto} yield,
$$ \int_0^1 M_{\infty}^p(s,f)\, ds \lesssim  \nm{f}_{H^p}^p \lesssim \left(\abs{f(0)}^p+\int_0^1 M_q^p(r, f')(1-r)^{p\left(1-\frac{1}{q}\right)}dr\right), \quad f\in \H(\D). $$
Finally, if $p=q$, by applying \cite[Lemma~4]{GaGiPeSis} with $\a=p-1$, 
$$\int_0^1 M_{\infty}^p(s,f) ds \lesssim \nm{f}_{D^p_{p-1}}^p,
 \quad f\in \H(\D). $$
This finishes the proof.
\end{proof}

Now, we are ready to prove our next result, which includes Theorem~\ref{th:lphp},
and ensures that $H^p$ may be replaced by $HL(p)$ and by any $H^1\left(p,q,p\left(1-\frac{1}{q}\right)\right)$
(in particular by $D^p_{p-1}$) in the stament of Theorem~\ref{th:lphp}.

\begin{theorem}\label{th:hpcuerpo}
Let $1<p<\infty$ and $\om \in \DD$. The following statements are equivalent:
\begin{itemize}
    
    \item[(i)] $H_{\omega}:L^p_{[0,1)} \to H^p$ is bounded;
    \item[(ii)] $H_{\omega}:L^p_{[0,1)} \to H^1\left(p,q,p\left(1-\frac{1}{q}\right)\right)$ is bounded
    for any $1<q<\infty$;
    \item[(iii)]  $H_{\omega}:L^p_{[0,1)} \to H^1\left(p,q,p\left(1-\frac{1}{q}\right)\right)$ is bounded
    for some $1<q<\infty$;
    \item[(iv)] $H_{\omega}:L^p_{[0,1)} \to HL(p)$ is bounded;  
    \item[(v)] $\om$ satisfies the condition
    \begin{equation}\label{eq:Hpcondition}
     \sup\limits_{0<r<1}\left(1+\int_0^r \frac{1}{\omg(t)^p} dt\right)^{\frac{1}{p}}
     \left(\int_r^1 \om(t)^{p'}\,dt\right)^{\frac{1}{p'}} <\infty.  
    \end{equation}
    \end{itemize}
\end{theorem}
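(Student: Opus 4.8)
The plan is to show that each of (i)--(iv) is equivalent to the Muckenhoupt-type condition (v), and to route \emph{all} implications through \eqref{eq:Hpcondition} rather than comparing the target spaces directly. The reason for this detour is that, unlike on the real segment, the kernel $K^\om_t(z)$ is not nonnegative for complex $z$, so there is no pointwise domination $|H_\om(f)|\le\widetilde{H_\om}(f)$ on $\D$; moreover the $H^p$ norm is not monotone in the Taylor coefficients, so one cannot pass from $HL(p)$ to $H^p$ for a single $f$ by hand. The backbone of the argument is Lemma~\ref{equivalencia norma}, which for the sublinear operator gives $\|\widetilde{H_\om}(f)\|_{H^p}\asymp\|\widetilde{H_\om}(f)\|_{D^p_{p-1}}\asymp\|\widetilde{H_\om}(f)\|_{HL(p)}$; combined with the facts that $H_\om(|f|)=\widetilde{H_\om}(f)$ and that $|\widehat{H_\om(f)}(n)|\le\widehat{H_\om(|f|)}(n)$, this lets me reduce every $HL(p)$-estimate to nonnegative data.

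For the sufficiency I would first observe that \eqref{eq:Hpcondition} evaluated at $r\to0$ forces $\int_0^1\om^{p'}\,dt<\infty$, so that $H_\om(f)$ and $H_\om(f)(0)=\int_0^1 f\om\,dt$ are well defined and bounded by $\|f\|_{L^p_{[0,1)}}$. I would then prove (v)$\Rightarrow$(ii) for \emph{every} $q\in(1,\infty)$ by repeating the proof of Theorem~\ref{acotacion en espacios de Bergman} with the formal choice $\nug\equiv1$: Minkowski's inequality in continuous form gives $M_q(r,H_\om(f)')\le\int_0^1|f(t)|\om(t)M_q(r,G^\om_t)\,dt$, Lemma~\ref{le:Gt} supplies $M_q^q(r,G^\om_t)\lesssim\omg(rt)^{-q}(1-rt)^{-(q-1)}$, and splitting the double integral $\int_0^1 M_q^p(r,H_\om(f)')(1-r)^{p(1-1/q)}\,dr$ into the regions $t>r$ and $t<r$ and linearizing the kernel by Lemma~\ref{caract. pesos doblantes} reduces matters to two weighted Hardy inequalities. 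The region $t>r$ produces exactly \eqref{eq:Hpcondition}, while the region $t<r$ produces the companion condition $\sup_{0<r<1}(1-r)^{1/p}\left(\int_0^r\om(t)^{p'}\omg(t)^{-p'}\,dt\right)^{1/p'}<\infty$, which I would deduce from \eqref{eq:Hpcondition} by an integration by parts together with \cite[Lemma~2.3]{PPR18}, exactly as the implication \eqref{condicion 1}$\Rightarrow$\eqref{condicion 2} is obtained there.

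The remaining links are then short. Since (ii) holds for all $q$, the implication (ii)$\Rightarrow$(iii) is trivial, and (ii)$\Rightarrow$(i) follows by choosing $q\in(1,p)$ and applying Lemma~\ref{le:Hpmixto}; the implication (i)$\Rightarrow$(iv) follows from Lemma~\ref{equivalencia norma} on nonnegative $f$ and coefficient domination for general $f$. The necessity implications (i)$\Rightarrow$(v), (iii)$\Rightarrow$(v) and (iv)$\Rightarrow$(v) I would handle by one common scheme: in each case one first bounds the restriction $\|H_\om(g)\|_{L^p_{[0,1)}}\lesssim\|g\|_X$, using \eqref{eq:Hprestriction} when $X=H^p$, Lemma~\ref{desigualdad Lp y H1(p,q, p(1-1/q))} when $X=H^1(p,q,p(1-1/q))$, and Lemma~\ref{equivalencia norma} followed by \eqref{eq:Hprestriction} when $X=HL(p)$. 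Testing against $\varphi_n=\min\{n,\om^{p'/p}\}$ and invoking $K^\om_s(t)\gtrsim1$ from Lemma~\ref{Estimaciones integral real nucleos} gives $\int_0^1\om^{p'}\,dt<\infty$ by monotone convergence; testing against $\phi_r=\om^{p'/p}\chi_{[r,1)}$ and using $K^\om_s(t)\gtrsim\omg(t)^{-1}$ for $s>r>t$ yields $\|H_\om(\phi_r)\|^p_{L^p_{[0,1)}}\gtrsim\left(\int_r^1\om^{p'}\right)^p\int_0^r\omg(t)^{-p}\,dt$, and comparing with the upper bound $\lesssim\int_r^1\om^{p'}$ returns \eqref{eq:Hpcondition}.

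I expect the main obstacle to be the sufficiency step, and specifically the verification that the companion Hardy condition on the region $t<r$ is a consequence of \eqref{eq:Hpcondition}: this is the same delicate integration-by-parts estimate that is the crux of Theorem~\ref{acotacion en espacios de Bergman}, and keeping the doubling estimates of Lemma~\ref{caract. pesos doblantes} consistent across the two regions is where care is most needed. A secondary, purely organizational difficulty is the insistence on proving (i)--(iv) equivalent \emph{through} (v): because $H_\om$ and $\widetilde{H_\om}$ are not pointwise comparable on $\D$ and $H^p$ is not coefficient-monotone, the tempting direct route from $HL(p)$ to $H^p$ is unavailable, and the necessity coming from (iv) in particular must be funneled through Lemma~\ref{equivalencia norma} and the real-line kernel estimate of Lemma~\ref{Estimaciones integral real nucleos}.
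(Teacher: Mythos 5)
Your proposal is correct and follows essentially the same route as the paper: sufficiency is obtained by running the proof of Theorem~\ref{acotacion en espacios de Bergman} with $\nug\equiv1$ (including the integration-by-parts reduction of the companion Hardy condition to \eqref{eq:Hpcondition} via \cite[Lemma~2.3]{PPR18}), the passage to $H^p$ uses Lemma~\ref{le:Hpmixto}, the $HL(p)$ statement is handled through $\widetilde{H_\om}$ and Lemma~\ref{equivalencia norma}, and every necessity direction is funneled through the restriction to $[0,1)$ (Fej\'er--Riesz/\eqref{eq:Hprestriction} or Lemma~\ref{desigualdad Lp y H1(p,q, p(1-1/q))}) followed by the test functions $\varphi_n$ and $\phi_r$ together with Lemma~\ref{Estimaciones integral real nucleos}. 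The only difference is organizational (you state all equivalences through (v), while the paper closes the cycle (i)$\Rightarrow$(v)$\Rightarrow$(ii)$\Rightarrow$(i) and then attaches (iii) and (iv)), which is immaterial.
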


\begin{Prf}{\em{Theorem~\ref{th:hpcuerpo}.}}
Firstly, we will prove (i)$\Rightarrow$(v)$\Rightarrow$(ii)$\Rightarrow$(i). 
Assume that (i) holds, then
by Fejer-Riesz inequality
\cite[Theorem 3.13]{Duren}, 
\begin{equation}
\label{eq:Hp1}
\| H_{\omega}(f)\|_{L^p_{[0,1)}}\lesssim \| H_{\omega}(f)\|_{H^p}\lesssim \| f\|_{L^p_{[0,1)}},
\end{equation}
so applying the test functions $\varphi_r(t)=\om(t)^{\frac{p'}{p}}\chi_{[r,1)}(t)$ to \eqref{eq:Hp1}
and  arguing as in the proof of   Theorem~\ref{acotacion en espacios de Bergman} (part (i)$\Rightarrow$(ii)),
(v) follows.

Now, in the proof of Theorem~\ref{acotacion en espacios de Bergman} (part (ii)$\Rightarrow$(i)),
we have also  proved that \eqref{condicion 1} implies
$$\abs{H_{\omega}(f)(0)}^p\nug(0)+\int_0^1M_q^p(t,H_{\omega}(f)')(1-t)^{p\left(1-\frac{1}{q}\right)}\nug(t)\,dt\lesssim \|f\|_{L^p_{\widehat{\nu}}}.$$
So replacing $\widehat{\nu}$ by $1$ in the above inequality and mimicking that proof, one can show that
\eqref{eq:Hpcondition} and the condition
\begin{equation}\label{eq:hp2}
\int_0^r 
  \left(\frac{ \om (t)}{\omg(t) }\right)^{p'}\,dt
  \lesssim \frac{1}{(1-r)^{\frac{p'}{p}}}, \quad 0<r<1,
\end{equation}
imply that (ii) holds. Now observe that 
\eqref{eq:Hpcondition} implies 
\begin{equation}\label{eq:hp3}
\int_r^1 \om(t)^{p'}\,dt
  \lesssim \frac{\widehat{\om}(r)^{p'}}{(1-r)^{\frac{p'}{p}}}, \quad 0<r<1.
\end{equation} 
 An integration by parts, \eqref{eq:hp3}
 and \cite[Lemma~2.3]{PPR18}  with $\eta(t)=(1-t)^{\frac{p}{p'}-1}$,
 gives that \eqref{eq:hp2} holds.
So, (v)$\Rightarrow$(ii).  
Next, (i) follows from (ii)  together with Lemma~\ref{le:Hpmixto}.

Obviously (ii)$\Rightarrow$(iii), and (iii) together with 
 Lemma~\ref{desigualdad Lp y H1(p,q, p(1-1/q))} yields
 $$\| H_{\omega}(f)\|_{L^p_{[0,1)}}\lesssim  \| f\|_{L^p_{[0,1)}},$$
 which implies (v). Therefore, (ii)$\Leftrightarrow$(iii).

It is clear that (iv) holds if and only if $\widetilde{H_{\omega}} :\, L^p_{[0,1)} \to HL(p)$ is bounded. So, 
 (i)$\Rightarrow$(iv) follows from Lemma~\ref{equivalencia norma}. Conversely if (iv) holds,
$\widetilde{H_{\omega}}:\, L^p_{[0,1)} \to H^p$ is bounded by Lemma~\ref{equivalencia norma}, which together with
Fejer-Riesz inequality \cite[Theorem 3.13]{Duren} gives
\begin{equation*}
\|\widetilde{H_{\omega}}(f)\|_{L^p_{[0,1)}}\lesssim \| \widetilde{H_{\omega}}(f)\|_{H^p}\lesssim \| f\|_{L^p_{[0,1)}}.
\end{equation*}
This inequality implies (v).
This finishes the proof.
\end{Prf}

\section{Further comments} \label{sec:fc}
In this section we  show that for any   radial doubling weight $\om$,   the Hilbert-type operator $H_\om$, the Bergman projection $P_\om$
    and its maximal version
    \begin{equation*}
    P^+_\om(f)(z)=\int_{\D}f(\z) \left|\overline{B^\om_{z}(\z)}\right|\,\om(\z)dA(\z),
    \end{equation*}
    are simultaneously bounded when they act on different spaces of functions. To be precise, 
we write $\om\in\Dd$ if there exist constants $K=K(\om)>1$ and $C=C(\om)>1$ such that $\widehat{\om}(r)\ge C\widehat{\om}\left(1-\frac{1-r}{K}\right)$ for all $0\le r<1$, 
and set $\DDD=\DD\cap\Dd$. The weights in the class $\DDD$ are usually called radial doubling weights.
A radial weight $\om$ belongs to the class 
  $\mathcal{M}$ if there exist constants $C=C(\om)>1$ and $K=K(\om)>1$ such that $\om_{x}\ge C\om_{Kx}$ for all $x\ge1$.
 It has recently proved 
that $\Dd$ is a proper subset of $\mathcal{M}$ \cite[Proposition~14]{PR19} but  $\DDD=\DD\cap\mathcal{M}$ \cite[Theorem~3]{PR19}. 
In addition, we observe that $\om_x=x\widehat{\om}_{x-1}, \quad x>0$, so 
$\om\in\DD\Leftrightarrow  \widehat{\om}\in\DD$ by Lemma~\ref{caract. pesos doblantes}(iv).
Bearing in mind this information and the fact that any radial non-increasing weight belongs to $\Dd$, the next result 
follows putting together   Theorem~\ref{acotacion en espacios de Bergman} and   \cite[Theorem~13]{PR19}.

\begin{theorem}\label{th:HomPom}
Let $1<p<\infty$, $\om\in \DD$ and $\nu$ a radial weight. Then, the following statements are equivalent:
\begin{itemize}
\item[(i)] $P_{\om}:L^p_{\nug}\to A^p_{\nug}$ is bounded;
\item[(ii)] $P^+_{\om}:L^p_{\nug}\to L^p_{\nug}$ is bounded;
 \item[(iii)] $\om\in \DDD$, $\nu\in\DD$ and $H_{\omega}:L^p_{\nug,[0,1)} \to A^p_\nu$ is bounded;
    \item[\rm(iv)] $\om\in \DDD$ and $\nu\in\DD$ and
    $$A_p(\om,\nug)= \sup_{0\le r<1}\frac{\widehat{\nu}(r)^{\frac{1}{p}}\left( \int_r^1  \left(\frac{\om(s)}{\nug(s)^{\frac1p}}\right)^{p'
    } \,ds\right)^{\frac{1}{p'}}}{\widehat{\om}(r)}<\infty;$$
    \item[\rm(v)]
    $\om\in \DDD$ and $\nu\in\DD$ and
     $$\sup\limits_{0<r<1}\left(1+\int_0^r \frac{\nug(t)}{\omg(t)^p} dt\right)^{\frac{1}{p}}
     \left(\int_r^1 \left(\frac{\om(t)}{\nug(t)^{\frac{1}{p}}}\right)^{p'}\,dt\right)^{\frac{1}{p'}} <\infty. $$
    \end{itemize}
\end{theorem}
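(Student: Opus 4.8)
The plan is to obtain this statement not from scratch but by assembling Theorem~\ref{acotacion en espacios de Bergman} with the characterization of the boundedness of the Bergman projection in \cite[Theorem~13]{PR19}, and then reconciling the hypotheses and the two Muckenhoupt-type conditions by means of the elementary weight-class facts recorded just above the statement. Accordingly, I would begin by isolating the dictionary that lets the two source results talk to each other: the equivalence $\om\in\DD\Leftrightarrow\widehat{\om}\in\DD$ (which comes from $\om_x=x\widehat{\om}_{x-1}$ together with Lemma~\ref{caract. pesos doblantes}(iv)) and the observation that every radial non-increasing weight lies in $\Dd$. These are precisely what permit one to phrase the doubling requirements uniformly as $\om\in\DDD$ and $\nu\in\DD$ across items (iii)--(v), regardless of how each source theorem packages its own hypotheses.

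With this dictionary in place, the two building blocks are used as follows. First, I would invoke \cite[Theorem~13]{PR19} to produce the equivalences $\mathrm{(i)}\Leftrightarrow\mathrm{(ii)}\Leftrightarrow\mathrm{(iv)}$: that result characterizes simultaneously the boundedness of $P_\om$ and of its maximal companion $P^+_\om$ on the relevant weighted Lebesgue space (here the weight is $\widehat{\nu}$, which is why the statement is written with the hatted subscripts $L^p_{\nug}$ and $A^p_{\nug}$), and it yields exactly the doubling conditions $\om\in\DDD$, $\nu\in\DD$ together with the quantity $A_p(\om,\nug)<\infty$ displayed in (iv). Second, Theorem~\ref{acotacion en espacios de Bergman} applies verbatim on the $H_\om$ side: since $\om\in\DDD\subset\DD$ and $\nu$ is a radial weight, the operator $H_{\omega}\colon L^p_{\nug,[0,1)}\to A^p_\nu$ is bounded if and only if \eqref{condicion 1} holds, and \eqref{condicion 1} is precisely the condition appearing in (v). Hence, once $\om\in\DDD$ and $\nu\in\DD$ are in force, one gets $\mathrm{(iii)}\Leftrightarrow\mathrm{(v)}$ immediately.

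It then remains to close the circle by establishing $\mathrm{(iv)}\Leftrightarrow\mathrm{(v)}$, that is, that the two Muckenhoupt-type conditions single out the same pairs $(\om,\nu)$ under the standing hypotheses $\om\in\DDD$ and $\nu\in\DD$. Both conditions carry the identical tail factor $\bigl(\int_r^1(\om(t)/\nug(t)^{1/p})^{p'}\,dt\bigr)^{1/p'}$, so the whole matter reduces to comparing the two ``head'' factors $\nug(r)^{1/p}/\omg(r)$ and $\bigl(1+\int_0^r \nug(t)\,\omg(t)^{-p}\,dt\bigr)^{1/p}$ as $r$ ranges over $[0,1)$. This comparison is a purely weight-theoretic estimate and is where the full strength of $\om\in\DDD$ enters: the two-sided polynomial control of $\widehat{\om}$ supplied by Lemma~\ref{caract. pesos doblantes} (from $\om\in\DD$) together with the reverse-doubling bound from $\om\in\Dd$, and the doubling of $\widehat{\nu}$ from $\nu\in\DD$, are used to pin down the size of the cumulative integral $\int_0^r \nug(t)\,\omg(t)^{-p}\,dt$ relative to the local datum at the endpoint.

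I expect this last reconciliation to be the main obstacle, not because any single estimate is deep, but because the two source theorems are stated with genuinely different-looking normalizations, and the whole point of the theorem is that their conditions nevertheless coincide; the verification must therefore be carried out with the doubling estimates applied carefully on the regions $st\ge\frac12$ and $st<\frac12$, exactly in the style already used in the proofs of Lemma~\ref{Estimaciones integral real nucleos} and of Theorem~\ref{acotacion en espacios de Bergman}. The dictionary facts from the first paragraph are what guarantee, at the end, that no integrability or regularity assumption beyond $\om\in\DDD$ and $\nu\in\DD$ is required, so that the operator-theoretic statements (i)--(iii) are indeed all equivalent.
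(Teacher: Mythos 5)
Your overall architecture coincides with the paper's: Theorem~\ref{th:HomPom} is indeed obtained by juxtaposing Theorem~\ref{acotacion en espacios de Bergman} (which gives (iii)$\Leftrightarrow$(v), since both items carry the same doubling prefix and Theorem~\ref{acotacion en espacios de Bergman} only needs $\om\in\DD\supset\DDD$) with \cite[Theorem~13]{PR19} applied to the weight $\nug$, the translation being exactly the dictionary you record: $\nug\in\Dd$ automatically because $\nug$ is non-increasing, and $\nug\in\DD\Leftrightarrow\nu\in\DD$ via $\om_x=x\widehat{\om}_{x-1}$ and Lemma~\ref{caract. pesos doblantes}(iv). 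Up to that point you are reproducing the paper's argument.

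The gap is in the one step you propose to prove yourself, namely (iv)$\Leftrightarrow$(v). Your reduction to ``comparing the two head factors, since the tail factors are identical'' is not a valid free-standing weight inequality: the quantities $\nug(r)^{1/p}/\omg(r)$ and $\bigl(1+\int_0^r \nug(t)\,\omg(t)^{-p}\,dt\bigr)^{1/p}$ are \emph{not} pointwise comparable under $\om\in\DDD$, $\nu\in\DD$. Already for $\om=\nu=1$ and $p>2$ the first is $\asymp(1-r)^{\frac1p-1}$ while the second is $\asymp(1-r)^{\frac2p-1}$, so they differ by a full power $(1-r)^{1/p}$; and even after renormalizing the numerator of $A_p$ to $\bigl(\int_r^1\nug\bigr)^{1/p}\asymp\bigl((1-r)\nug(r)\bigr)^{1/p}$, examples such as $\om=1$, $\nu=(1-|z|)^{10}$ with $p<12$ show the cumulative integral $\int_0^r\nug\,\omg^{-p}$ can stay bounded while the endpoint quantity $(1-r)\nug(r)\omg(r)^{-p}$ tends to $0$. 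Only one inequality between the heads is a genuine consequence of doubling (the minorization $\int_{2r-1}^r\nug\,\omg^{-p}\gtrsim(1-r)\nug(r)\omg(r)^{-p}$ used in the proof of Theorem~\ref{acotacion en espacios de Bergman}); the converse direction is a Muckenhoupt-type self-improvement that does not follow from the dyadic doubling estimates of Lemma~\ref{caract. pesos doblantes} alone, and the naive bound of the cumulative integral by its value at the endpoint loses a factor $\log\frac{1}{1-r}$. The paper does not attempt this comparison: the equivalence of the two Muckenhoupt-type conditions is imported wholesale from \cite[Theorem~13]{PR19}, which lists both (iv) and (v) among its equivalent characterizations of the boundedness of $P_\om$ and $P^+_\om$ on $L^p_{\nug}$. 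So you should either cite that full list, or close the loop purely through the operator statements ((i)$\Leftrightarrow$(ii)$\Leftrightarrow$(iv)$\Leftrightarrow$(v) from \cite{PR19}, then (v)$\Leftrightarrow$(iii) from Theorem~\ref{acotacion en espacios de Bergman}); as written, your plan leaves the only nontrivial new estimate unproved and approached in a way that cannot succeed pointwise.
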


In particular, Theorems~\ref{th:HomPom} says that $P_{\om}:L^p_{\nug}\to A^p_{\nug}$ and $H_{\omega}:L^p_{\nug,[0,1)} \to A^p_\nu$ are simultaneously bounded
whenever $\om\in\DDD$, $\nu\in\DD$ and $1<p<\infty$.

Finally, joining Theorem~\ref{th:lphp} and   \cite[Theorem~13]{PR19}  
it can be obtained an analogous result to Theorem~\ref{th:HomPom} related to the boundedness of the operator $H_{\omega}:L^p_{[0,1)} \to H^p$. Indeed, if 
$\om\in\DDD$ and $1<p<\infty$, then
$P_{\om}:L^p\to A^p$ is bounded if and only if  $H_{\omega}:L^p_{[0,1)} \to H^p$ is bounded.

\end{document}